\numberwithin{equation}{section}
\newtheorem{theo}{Theorem}
\newtheorem{defn}[theo]{Definition}
\newtheorem{exam}[theo]{Example}
\newtheorem{lem} [theo]{Lemma}
\newtheorem{cor}[theo]{Corollary}
\newtheorem{prop}[theo]{Proposition}
\newtheorem{rem}[theo]{Remark}
\newtheorem{algor}[theo]{Algorithm}
\newtheorem{remark} [theo]{Remark}
\newtheorem{prob}[theo]{Problem}
\numberwithin{equation}{section}
\makeatletter \@addtoreset{equation}{section}
\def\CT{\mathop{\mathrm{CT}}}
\def\x{\mathbf{x}}
\def\b{\mathbf{b}}
\def\N{\mathbb{N}}
\def\Q{\mathbb{Q}}
\def\Z{\mathbb{Z}}
\def\C{\mathbb{C}}
\def\R{\mathbb{R}}
\def\y{\mathbf{y}}
\def\0{\mathbf{0}}
\def\rank{\mathrm{rank}}
\def\sgn{\mathrm{sgn}}
\def\c{\mathfrak{c}^+}
\def\d{\mathfrak{c}^-}
\def\ord{\mathrm{ord}}
\def\1{\mathbf{1}}
\def\cs{{\mathbf{s}}}
\def\mydot{\raisebox{1mm}{$\centerdot$}}
\def\ind{\mathrm{ind}}
\title{A combinatorial simplicial cone decomposition}
\author{guoce xin, xinyu xu, zihao zhang}
\address{ $^{1,2,3}$School of Mathematical Sciences, Capital Normal University,
 Beijing 100048, PR China}
\email{$^1$\texttt{guoce.xin@163.com }\ \& $^2$\texttt{xinyu0510x@163.com}\& $^2$\texttt{zihao-zhang@foxmail.com}}
\date{3.24.2022}
\thanks{$*$ This work was partially supported by NSFC(12071311).}
\begin{document}

\begin{abstract}

This paper introduces an algebraic combinatorial approach to simplicial cone decompositions, a key step in solving inhomogeneous linear Diophantine systems and counting lattice points in polytopes. We use constant term manipulation on the system \( A\alpha = \mathbf{b} \), where \( A \) is an \( r \times n \) integral matrix and \( \mathbf{b} \) is an integral vector. We establish a relationship between special constant terms and shifted simplicial cones. This leads to the \texttt{SimpCone[S]} algorithm, which efficiently decomposes polyhedra into simplicial cones. Unlike traditional geometric triangulation methods, this algorithm is versatile for many choices of the strategy \( \texttt{S} \) and can also be applied to parametric polyhedra.
The algorithm is useful for efficient volume computation of polytopes and can be applied to address various new research projects.
Additionally, we apply our framework to unimodular cone decompositions. This extends the effectiveness of the newly developed \texttt{DecDenu} algorithm from denumerant cones to general simplicial cones.
\end{abstract}
\maketitle

\noindent
\begin{small}
 \emph{Mathematics subject classification}: Primary 11Y50; Secondary 05A15; 52B05.
\end{small}

\noindent
\begin{small}
\emph{Keywords}: Constant terms, Shifted simplicial cones, Polyhedron, Linear Diophantine system.
\end{small}

\section{Introduction}

%

Throughout this paper, $A$ is always an $r\times n$ matrix with integer entries in $\Z$, and
$\b$ is an integral vector in $\Z^r$, unless specified otherwise.

A fundamental problem in combinatorics is to solve the inhomogeneous linear Diophantine system $A\alpha = \mathbf{b},$ i.e., to find all nonnegative integral vectors $\alpha \in \N^n$ satisfying the equation. This problem coincides with a fundamental problem in computational geometry: enumerating lattice points in a polyhedron $P$ in $\R^n$. These problems become equivalent when $P=P(A,\b)=\{ \alpha\in \R_{\geq 0}^n: A\alpha =\b\}$ is bounded.

For any subset $S$ in $\R^n$, denote by
$$\sigma_S(\y) = \sum_{\alpha \in S\cap \Z^n} \y^{\alpha} =\sum_{\alpha \in S\cap \Z^n} y_1^{\alpha_1}\cdots y_n^{\alpha_n} $$
its lattice point generating function. The lattice point counting problem has a nice solution
in computational geometry by Barvinok's seminal work \cite{1994A}. This work presented an algorithm that  counts lattice points in
convex rational polyhedra in polynomial time when the dimension of the polytope is
fixed. Barvinok's algorithm consists of four major steps:
0) Transform the polytope into tangent cones by Brion's theorem \cite{Brion1988}; 1) Decompose each cone into simplicial cones; 2) Decompose each simplicial cone
into unimodular cones; 3) Compute the limit of $\sigma_P(\y)$ (written as a sum of rational functions) at $y_1=\cdots=y_n=1$.
See \cite{XinTodd} for details and terminologies not presented here.

The major purpose of this paper is to give an algebraic combinatorial approach to Step 1) on simplicial cone decompositions.
Indeed, Barvinok's computation scheme can be adapted into our algebraic combinatorics framework with Step 0) avoided. See Section \ref{SimpCone} for details.

In algebraic combinatorics, we are interested in $\sigma_P(\y)$ where $P$ is a (not necessarily bounded) polyhedron represented as $P=P(A,\b)$,
which becomes a cone $K=P(A,\0)$ when $\b=\0$. This does not lose generality since
a general polyhedron can be transformed into this type.
Our development is based on the following well-known constant term formula:
\begin{align}\label{equ-inhomo}
  \sigma_{P(A,\b)} = \CT_{\Lambda} \frac{\Lambda^{-\b}}{
\prod_{j=1}^n (1- \Lambda^{A_j} y_j)}, \text{where $A_j$ is the $j$-th column  of $A$},
\end{align}
where $\CT_\Lambda$ means to take constant term in $\Lambda=(\lambda_1,\dots, \lambda_r)$
of a formal Laurent series.

Stanley used residue computation to write $ \sigma_{P(A,\b)}$ as a sum of groups, with each group a
sum of similar rational functions involving fractional powers and roots of unity. The formula
was considered too complicated to deal with but allowed Stanley to derive his Monster reciprocity theorem in \cite{1975Combinatorial}.
We observe that each group corresponds to a shifted simplicial cone. This leads to a decomposition of the following form.
\begin{align}\label{equ-dec-K}
  \sigma_{P(A,\b)}(\y)=\sum_i s_i \sigma_{K_i^{v_i}}(\y),
\end{align}
where $s_i\in \Q$, and $K_i^{v_i}$ are shifted simplicial cones.

Our major contribution is Algorithm \ref{algor-Simpcone}, which provides the decompositions described above. A simplified version of the algorithm, named \texttt{SimpCone} (for the case $\b=\0$), was first stated in \cite[Algorithm 16]{2023Algebraic} without a proof. It is used for efficient volume computation of polytopes. We will provide the promised proof in this paper.

The structure of the paper is outlined as follows. In Section \ref{sec:pre}, we introduce the concept of cones and extend the XinPF algorithm, originally developed by the first named author, to allow for constant term extraction with fractional powers. We introduce two pivotal operators that act on rational functions with fractional powers. One is \(\mathcal{Z}_{\y}\), defined in Definition \ref{defn-Z_y}, which extracts terms with integer powers of \(\y\). The other is \(\CT_{\lambda_i,j}\), defined in Definition \ref{defn-CT-Z}, which extracts the constant term in \(\lambda_i\) while considering only the contribution from the \(j\)-th denominator factor.

In Section \ref{sec:CT->Condec}, we establish the connection between constant term extraction of fractional rational functions and column operations on the corresponding matrix form. Let $I_k=\{i_1,i_2,\dots,i_k \}\subseteq[r]:=\{1,2,\dots,r\}$ and $J_k=\{j_1,j_2,\dots,j_k \}\subset [n]$ be $k$-element sets for $k\leq r$. It is known that $\CT_{\lambda_{i_{k+1}},j_{k+1}} [M_k\langle I_k;J_k\rangle]$ is a sum of similar terms involving roots of unity for $k<r$. We find that one representative can be used to reconstruct the sum. The constant term process can be performed according to the following rules.
\begin{align*}
  \CT_{\lambda_{i_{k+1}},j_{k+1}}\mathcal{Z}_{\y} [M_k\langle I_k;J_k\rangle]&=\mathcal{Z}_{\y}\CT_{\lambda_{i_{k+1}},j_{k+1}}[M_k\langle I_k;J_k\rangle],\   \  \text{(Proposition \ref{prop-commu})}\\
  \CT_{\lambda_{i_{k+1}}} [M_k\langle I_k;J_k\rangle] &= \sum_{j} \pm  \CT_{\lambda_{i_{k+1}},j} [M_k\langle I_k;J_k\rangle],\   \  \text{(Lemma \ref{lem-Irule})}\\
  \CT_{\lambda_{i_{k+1}},j_{k+1}} [M_k\langle I_k;J_k\rangle]&=\pm \mathcal{Z}_{y_{j_{k+1}}} [M_{k+1}\langle I_{k+1};J_{k+1}\rangle], \   \  \text{(Theorem \ref{theo-single})}
\end{align*}
where \(M_{k+1}\langle I_{k+1};J_{k+1}\rangle\) is obtained from \(M_k\langle I_k;J_k\rangle\) through column operations using the $(n+i_{k+1},j_{k+1})$th entry of \(M_k\langle I_k;J_k\rangle\) as the pivot.

This allows us to establish the section's main result, Theorem \ref{theo-CT-cone}, which states that each group, indexed by $J_r$, within \(\sigma_{P(A,\b)}(\y)\) corresponds to a shifted simplicial cone. The theorem is derived through a recursive process. Interestingly, the result does not depend on the recursive process but only on the set $J_r$.

Section \ref{SimpCone} is based on Theorem \ref{theo-CT-cone}. We present the \texttt{SimpCone[S]} algorithm for simplicial cone decompositions. This algorithm is versatile, offering multiple decomposition options through varying strategies \( \texttt{S}\), and it remains applicable even when the matrix \( A \) includes parameters. Such decompositions can be treated as valuable complements to geometric decompositions of cones. Our \texttt{SimpCone[S]} algorithm includes the \texttt{Polyhedral Omega} algorithm in \cite{2017Polyhedral} as a special case. Its flexibility in homogeneous cases is demonstrated with Example \ref{exam-homo}. Additionally, we provide a novel proof of Stanley's reciprocity theorem for rational cones.

Section \ref{sec:unimodular} presents the application of our framework to unimodular cone decomposition in Step 2). For a full-dimensional simplicial cone \( K \) of dimension \( d \), we can construct a homogeneous linear Diophantine system \( A\alpha = \mathbf{0} \) such that \( K \) is realized as the cone for \( M_d\langle [d];[d]\rangle \). This new viewpoint allows us to conclude that most simplicial cones (up to isomorphism) are denumerant cones, which enables the \texttt{DecDenu} algorithm (a fast algorithm for denumerant cones) to provide a unimodular cone decomposition. For all shifted cones derived from \( A\alpha = \mathbf{b} \), the \texttt{DecDenu} algorithm can be used to achieve a unimodular cone decomposition. This process is summarized in Algorithm \ref{alg_simpconeANDdecdenu}.

In Section \ref{sec:project}, we describe several future projects resulting from our framework. 
\section{Concepts and Tools}\label{sec:pre}
In the following two subsections, we introduce the concepts and tools related to cones and constant terms.
\subsection{Cones}
Let \( w_1, \dots, w_{m} \) be the column vectors of an \( n \times m \) matrix \( W \). The cone generated by these vectors is denoted as
\[ K = K(W) = K(w_1, \dots, w_m) = \left\{ \sum_{i=1}^m k_i w_i : k_i \in \mathbb{R}_{\geq 0} \right\}. \]
We assume that this set does not contain a straight line and that the generators are minimal (also known as extreme rays). A cone is termed rational if it has a set of rational generators in \( \mathbb{Q}^n \), which we also denote as \( W \). By clearing denominators, we can assume each generator \( w_i \) has integer entries. Factoring out the greatest common divisor, we obtain primitive generators \( \bar{w}_i \) such that \( \gcd(\bar{w}_i) = 1 \). The dimension \( \dim K \) of \( K \) is defined as the rank of \( W \), and \( K \) is called full-dimensional if \( \dim K = n \).

A cone \( K \) is called simplicial if the vectors \( w_1, \dots, w_m \) are linearly independent. In the full-dimensional case (i.e., when \( m = n \)), the index of \( K \) is denoted by \( \ind(K) := \lvert \det(\bar w_1, \dots, \bar w_n)\rvert \)  and the dual cone of \( K \) is known to be \( K^* = K((W^{-1})^t) \). Furthermore, a full-dimensional simplicial cone \( K \) with index \( 1 \) is called unimodular.

Let \( K^v = v + K = \{ v + \alpha : \alpha \in K \} \) denote the translation of the cone \( K \) by the vector \( v \in \mathbb{Q}^n \). This translated cone, known as a shifted cone, retains the same generators \( w_1, \dots, w_m \) but has its vertex at \( v \). For notational simplicity, we introduce the shorthand notation:
\begin{equation}\label{equ-shift}
 K^v = K^{\mydot}(w_1, w_2, \dots, w_m, v),
\end{equation}
where the superscript \(\mydot\) indicates that the cone has been shifted, with the vertex being the last column of the matrix \( (W,v) \).

Geometers have developed a variety of methods for triangulating a cone \( K \) into simplicial cones, including the methods of Lasserre \cite{1983An} and Lawrence \cite{1991Polytope}. One such triangulation yields an expression of the form through shifting:
\[ \sigma_{K^v}(\mathbf{y}) = \sum_i s_i \sigma_{K_i^v}(\mathbf{y}), \]
where \( s_i = \pm 1 \) and \( K_i \) are the resulting simplicial cones. This contrasts with the decomposition presented in Equation \eqref{equ-dec-K}, where the simplicial cones are translated by distinct vertices, and the coefficients \( s_i \) can be rational numbers.

\subsection{Brief introduction to the XinPF Algorithm}
We briefly introduce Xin's partial fraction algorithm (XinPF for short) on which the \texttt{SimpCone[S]} algorithm is built. The new point is that
we allow fractional powers. In what follows, the term fractional always refers to fractional powers and $N$ is a suitable large integer.
For instance, a fractional Laurent polynomial in $y_i, i=1,\dots, n$ is just a Laurent polynomial in $y_i^{1/N}$ for all $i$.

We often use the following notation. For a nonzero rational function $F(\lambda)$, define $\deg_\lambda F(\lambda)$ to be the degree of the numerator minus the degree of the denominator, and define $\ord_{\lambda}F(\lambda)$ to be the pole order of $F(\lambda)$, that is, the $p$ such that
$\lim_{\lambda=0} \lambda^p F(\lambda)$ is a nonzero constant. If $\deg_\lambda F(\lambda)<0$, then we say $F(\lambda)$ is a proper rational function.
If $\ord_{\lambda}F(\lambda)< 0$ then $F(0)=0$. These notation naturally extend to the fractional case.

We use the list $\texttt{vars}=[y_1,y_2,\dots, y_n]$ to specify the order of the variables, which defines the working field
 $G=\mathbb{C}((y_n^{1/N}))((y_{n-1}^{1/N}))\cdots ((y_1^{1/N}))$.
$G$ is called a field of iterated Laurent series in \cite{xin2004fast}, except that we allow fractional powers here.
An element $\eta$ in the field $G$ is called a fractional series (short for fractional iterated Laurent series).
In particular, if $\eta$ has only finitely many nonzero coefficients, then we call it a fractional Laurent polynomial.
We only need the following properties of $G$. For more details, interested readers are referred to \cite{xin2004fast}.

\emph{Elements in $G$ have unique series expansions.}
Let $\alpha\in \Q^n$ be a nonzero vector with the first nonzero entry $\alpha_j$, so $\alpha_i=0$ for $i<j$.
A ``monomial" $M=\zeta \y^\alpha$, where $\zeta \in \C$ is usually a root of unity,
is said to be \emph{small} if $\alpha_j >0$, denoted $M<1$;
and is said to be \emph{large} if otherwise $\alpha_j<0$, denoted $M>1$.
By working in $G$, we only need to keep in mind the following two series expansions:
\begin{align} \label{issmall}
  \frac{1}{1-M} =\left\{
                   \begin{array}{ll}
                    \displaystyle \sum_{k\ge 0} M^k, & \text{ if } M<1; \\
                    \displaystyle \frac{1}{-M(1-1/M)}=  \sum_{k\ge 0} - \frac{1}{M^{k+1}}, & \text{ if } M>1.
                   \end{array}
                 \right.
\end{align}

An Elliott-rational function $E$ is a special type of rational function.
When expanding $E$ as a series in $G$, we usually rewrite $E$ in its \emph{proper form}:
\begin{align}\label{e-properform}
  E = \frac{L}{(1-M_1)(1-M_2)\cdots (1-M_n)},  \tag{proper form}
\end{align}
where $L$ is a Laurent polynomial and $M_i<1$ for all $i$. Note that the proper form of $E$ is not unique. For instance, both $1/(1-y)$ and $(1+y)/(1-y^2)$ are proper forms.

Now we work in the field $G$ specified by the list $\texttt{vars}=[\y,\Lambda]=[y_1,y_2,\dots, y_n,\lambda_1,\dots, \lambda_r]$.
For any $F$ in $G$, define the constant term operator by
$$\CT_\Lambda F = \CT_\Lambda \sum_{} f_{i_1,\dots,i_r} \lambda_1^{i_1} \cdots \lambda_r^{i_r}  = f_{0,\dots,0}.$$
This is well-defined as $F$ has a unique series expansion in $G$. The constant term operators $\CT_{\lambda_i}$ defined this way commute with each other, so $\CT_{\Lambda}$ extracts the constant term in a set of variables.
The core problem is to compute the constant term of an Elliott-rational function $E$, which is known to be another Elliott-rational function.

Algorithm XinPF is an elimination-based algorithm, so the basic problem is to compute
$\CT_\lambda E$ in $G$ for a variable $\lambda=\lambda_i$. Since the substitution $\lambda\to \lambda^m$
does not change the constant term for any positive integer $m$, we assume $E$ is a rational function in $\lambda$.
For this constant term in a single variable, we perform partial fraction decomposition and extract the constant term.

To be precise, we first write $E$ as a standard rational function in $\lambda$:
\begin{align}\label{e-positive-form}
E= \frac{L(\lambda)}{\prod_{i=1}^n (1-u_i \lambda^{a_i})},
\end{align}
where $L(\lambda)$ is a Laurent polynomial, $u_i$ are independent of $\lambda$ and $a_i$ are
positive integers for all $i$. Note that $u_i \lambda^{a_i}$ might be large, like $y_1^{-1}\lambda^2>1$.

\begin{prop}\label{p-partialfraction}\cite{2015A}
Suppose the partial fraction decomposition of $E$ is given by
\begin{align}
  \label{e-E-parfrac}
E= P(\lambda)+ \frac{p(\lambda)}{\lambda^k} +\sum_{i=1}^n \frac{A_i(\lambda)}{1-u_i \lambda^{a_i}},
\end{align}
where the $u_i$'s are independent of $\lambda$, $P(\lambda)$, $p(\lambda)$, and the $A_i(\lambda)$'s are all polynomials, $\deg p(\lambda)<k$, and $\deg A_i(\lambda)<a_i$ for all $i$.
Then we have
\begin{equation*}
\CT_\lambda E = P(0) + \sum_{u_i \lambda^{a_i} <1} A_i(0).
\end{equation*}
Moreover, if $E$ is \emph{proper in $\lambda$}, i.e., $\deg_\lambda E<0$, then
\begin{equation}\label{equ-contri}
  \CT_\lambda E = \sum_{u_i \lambda^{a_i} <1} A_i(0)
\end{equation}

If $E|_{\lambda=0}=\lim_{\lambda\rightarrow 0} E$ exists, i.e., $\ord_\lambda E\leq 0$, then
\begin{equation}\label{equ-dual-contri}
  \CT_\lambda E = E|_{\lambda=0} - \sum_{u_i \lambda^{a_i} >1} A_i(0).
\end{equation}
\end{prop}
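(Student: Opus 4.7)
The plan is to compute $\CT_\lambda$ termwise on the partial fraction decomposition \eqref{e-E-parfrac}, using the uniqueness of series expansions in $G$ together with the two expansion rules in \eqref{issmall}. Linearity of $\CT_\lambda$ on $G$ makes this reduction legitimate, so the work reduces to analyzing four kinds of summands: the polynomial piece $P(\lambda)$, the principal part $p(\lambda)/\lambda^k$, and the two types of partial fraction summands $A_i(\lambda)/(1-u_i\lambda^{a_i})$ corresponding to whether $u_i\lambda^{a_i}<1$ or $u_i\lambda^{a_i}>1$.

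The first two pieces are immediate: $\CT_\lambda P(\lambda)=P(0)$ directly, and $\CT_\lambda \bigl(p(\lambda)/\lambda^k\bigr)=0$ because the condition $\deg p<k$ means every monomial of $p(\lambda)/\lambda^k$ has strictly negative $\lambda$-exponent. The main step is the analysis of each partial fraction summand. If $u_i\lambda^{a_i}<1$, I would expand $(1-u_i\lambda^{a_i})^{-1}=\sum_{m\ge 0}u_i^m\lambda^{m a_i}$; multiplying by $A_i(\lambda)$ (which has $\deg A_i<a_i$) shows that only the $m=0$ and $\lambda^0$ monomial of $A_i$ survive the constant term, contributing exactly $A_i(0)$. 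If $u_i\lambda^{a_i}>1$, I would use the dual expansion $(1-u_i\lambda^{a_i})^{-1}=-\sum_{m\ge 1}u_i^{-m}\lambda^{-m a_i}$; since $\deg A_i<a_i\le m a_i$ for all $m\ge 1$, every monomial in the product carries a strictly negative $\lambda$-power, so the contribution is $0$. Summing these pieces yields the first identity.

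The second identity follows by observing that the polynomial part $P(\lambda)$ appearing in \eqref{e-E-parfrac} is forced to be zero whenever $E$ is proper in $\lambda$. This is a standard consequence of the partial fraction algorithm: $P(\lambda)$ is the polynomial quotient obtained when the numerator degree of $E$ (as a rational function in $\lambda$ with coefficients in the appropriate field) is at least the denominator degree, so $\deg_\lambda E<0$ forces $P\equiv 0$, and the formula collapses to $\sum_{u_i\lambda^{a_i}<1}A_i(0)$.

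For the third identity, the hypothesis $\ord_\lambda E\le 0$ means $E$ has no pole at $\lambda=0$, which forces the principal part $p(\lambda)/\lambda^k$ in \eqref{e-E-parfrac} to vanish. I can then evaluate the remaining terms at $\lambda=0$: using $a_i>0$, each factor $1-u_i\lambda^{a_i}$ evaluates to $1$ at $\lambda=0$, so
\[
E|_{\lambda=0}=P(0)+\sum_{i=1}^n A_i(0).
\]
Solving for $P(0)$ and substituting into the first identity produces the third identity, because the $A_i(0)$ with $u_i\lambda^{a_i}<1$ cancel and only the $u_i\lambda^{a_i}>1$ terms remain, with the correct sign. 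The only subtle point — and essentially the whole technical content — is carefully justifying the small/large expansion analysis of $A_i(\lambda)/(1-u_i\lambda^{a_i})$, in particular noting that each $u_i\lambda^{a_i}$ is unambiguously small or large in $G$ (so the dichotomy in \eqref{issmall} applies), and that the degree bound $\deg A_i<a_i$ is exactly what kills the unwanted contributions in both branches.
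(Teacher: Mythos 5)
Your argument is correct and follows the same route as the paper: termwise constant-term extraction on the partial fraction decomposition, using the small/large expansion dichotomy from \eqref{issmall} to show each $A_i(\lambda)/(1-u_i\lambda^{a_i})$ contributes $A_i(0)$ or $0$ according as $u_i\lambda^{a_i}<1$ or $>1$. The paper gives only a terse sketch for the main formula and then invokes duality for \eqref{equ-dual-contri}, whereas you make the dual formula explicit by evaluating the decomposition at $\lambda=0$ and eliminating $P(0)$; that is a valid and slightly more self-contained way to present the same observation.
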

The proposition holds since, if written in proper form, we obtain
$$ \frac{A_i(\lambda)}{1-u_i \lambda^{a_i}} =\left\{\begin{array}{ll}
                     \displaystyle \frac{A_i(\lambda)}{1-u_i \lambda^{a_i}} \ \mathop{\longrightarrow}\limits^{\CT_\lambda} A_i(0), & \text{ if } u_i \lambda^{a_i} <1; \\
                    \displaystyle \frac{A_i(\lambda)}{-u_i\lambda^{a_i} (1-\frac{1}{u_i \lambda^{a_i}})}=\frac{\lambda^{-a_i}A_i(\lambda)}{-u_i(1-\frac{1}{u_i \lambda^{a_i}})}
                    \ \mathop{\longrightarrow}\limits^{\CT_\lambda} 0, & \text{ if } u_i \lambda^{a_i}>1.
                   \end{array}
                 \right.
$$
Formula \eqref{equ-dual-contri} is the dual of \eqref{equ-contri}.
Because of these two formulas, it is convenient to call the denominator factor $1-u_i\lambda^{a_i}$
\emph{contributing} if $u_i\lambda^{a_i}< 1$ and \emph{dually contributing} if $u_i\lambda^{a_i}>1$.

\begin{defn}\label{defn-A}
Let $E$ be as above. We denote by
$$\CT_\lambda \frac{1}{\underline{1-u_s \lambda^{a_s}}}\cdot E (1-u_s \lambda^{a_s})=\CT_\lambda \frac{1}{\underline{1-(u_s \lambda^{a_s})^{-1}}}\cdot    E(1-(u_s \lambda^{a_s})^{-1}) :=A_s(0).$$
\end{defn}

By allowing fractional powers and using roots of unity, we have
\begin{equation}\label{equ-A_s(0)}
\frac{A_s(\lambda)}{1-u_s \lambda^{a_s}}=\frac{1}{a_s}\sum_{\zeta: \zeta^{a_s}=1} \frac{1}{1-u_s^{\frac{1}{a_s}}\zeta\lambda}   \big(E\cdot(1-u_s \lambda^{a_s})\big)\Big{|}_{\lambda= (u_s^{\frac{1}{a_s}}\zeta)^{-1}}.
\end{equation}

This provides an explicit formula for $A_s(0)$ as follows.
\begin{lem}\label{lem-unit-E}
Suppose $E$ as in \eqref{e-positive-form} is rewritten in the form
$$E= \frac{L(\lambda)}{\prod_{i=1}^n (1-u_i \lambda^{b_i})},$$
where $b_i=\pm a_i$ for all $i$. Then for nonzero integer $b_s$,
\begin{equation}\label{equ-addifomula-A}
\CT_\lambda \frac{1}{\underline{1-u_s \lambda^{b_s}}}\cdot E (1-u_s \lambda^{b_s})=\frac{1}{b_s}\sum_{\zeta: \zeta^{b_s}=1}E (1-u_s \lambda^{b_s}) \Big|_{\lambda=({u_j}^{\frac{1}{b_s}}\zeta)^{-1}}.
\end{equation}
\end{lem}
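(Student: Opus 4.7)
The strategy is to reduce the statement to the single-variable partial-fraction identity packaged in \eqref{equ-A_s(0)}. That identity already gives the claimed formula when the pivotal denominator factor has the positive-exponent shape $1-u_s\lambda^{a_s}$, so the only genuinely new content of Lemma \ref{lem-unit-E} is that $b_s$ is allowed to be negative while the right-hand side is still written using the factor $1-u_s\lambda^{b_s}$ rather than its reciprocal. I therefore plan to split into two cases according to the sign of $b_s$.

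The case $b_s>0$ is essentially immediate. Applying \eqref{equ-A_s(0)} with $a_s$ replaced by $b_s$ and then setting $\lambda=0$ on both sides (the factor $1/(1-u_s^{1/b_s}\zeta\lambda)$ becomes $1$) identifies $A_s(0)$ with the sum on the right-hand side of \eqref{equ-addifomula-A}. For the case $b_s<0$, set $c_s=-b_s>0$ and $v_s=u_s^{-1}$, and use the algebraic identity
\[
1-u_s\lambda^{b_s} \;=\; -u_s\lambda^{b_s}\bigl(1-v_s\lambda^{c_s}\bigr).
\]
Definition \ref{defn-A} guarantees that the underlined constant-term symbol produces the same number $A_s(0)$ whether one marks the factor $1-u_s\lambda^{b_s}$ or its inverted companion $1-v_s\lambda^{c_s}$, so the already-settled positive case can be applied to the latter, giving
\[
A_s(0) \;=\; \frac{1}{c_s}\sum_{\zeta^{c_s}=1} \bigl(E\cdot(1-v_s\lambda^{c_s})\bigr)\Big|_{\lambda=(v_s^{1/c_s}\zeta)^{-1}}.
\]

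The remaining step is to match this with the target formula written in exponent $b_s$. Three small verifications suffice: the sets $\{\zeta:\zeta^{b_s}=1\}$ and $\{\zeta:\zeta^{c_s}=1\}$ coincide; the evaluation point $(u_s^{1/b_s}\zeta)^{-1}$ equals $(v_s^{1/c_s}\zeta)^{-1}$ once one fixes the branch $u_s^{1/b_s}:=v_s^{1/c_s}$; and at these points $u_s\lambda^{b_s}=1$, so the cofactor identity above collapses to $E\cdot(1-u_s\lambda^{b_s}) = -E\cdot(1-v_s\lambda^{c_s})$ there. Combined with the prefactor change $1/b_s=-1/c_s$, the two sign flips cancel and the formulas agree. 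The main obstacle I anticipate is purely the sign and fractional-exponent bookkeeping of this negative case, in particular making a consistent branch choice for $u_s^{1/b_s}$ so that everything fits together; no analytic ingredient beyond Proposition \ref{p-partialfraction} is needed.
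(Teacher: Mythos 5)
Your proposal is correct and follows essentially the same route as the paper's own proof: the positive case reads off directly from \eqref{equ-A_s(0)}, and the negative case is handled by switching to the reciprocal denominator factor via Definition \ref{defn-A}, applying the positive case, and then carrying out the sign/branch/root-of-unity bookkeeping, exactly as in the paper. The only cosmetic difference is that the paper writes the intermediate rewrite explicitly as $\CT_\lambda \frac{1}{\underline{1-u_s^{-1}\lambda^{a_s}}}\cdot(-u_s^{-1}\lambda^{a_s}F)$, whereas you invoke Definition \ref{defn-A} by name; the substance is identical.
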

\begin{proof}
When $b_s=a_s>0$, the formula is a direct consequence of \eqref{equ-A_s(0)}.

When $b_s=-a_s<0$, we have to rewrite $E (1-u_s \lambda^{b_s})$, denoted as  $F$, to apply \eqref{equ-A_s(0)}:
\begin{align*}
  A_s(0)&=\CT_\lambda \frac{1}{\underline{1-u_s \lambda^{b_s}}}\cdot F=\CT_\lambda \frac{1}{\underline{1-u_s^{-1} \lambda^{a_s}}}\cdot \big(-u_s^{-1}\lambda^{a_s}\cdot F\big)\\
  &=\frac{1}{a_s}\sum_{\zeta: \zeta^{a_s}=1}\big(-u_s^{-1}\lambda^{a_s}\cdot F\big)\Big|_{\lambda=({u_s}^{-\frac{1}{a_s}}\zeta)^{-1}}
  =\frac{1}{b_s}\sum_{\zeta: \zeta^{b_s}=1}F\Big|_{\lambda=({u_s}^{\frac{1}{b_s}}\zeta)^{-1}}.
\end{align*}
The lemma then follows.
\end{proof}

Thus, when taking the constant term in $\Lambda$, the iterative application of Lemma \ref{lem-unit-E} will result in groups of similar terms involving roots of unity.
It is not easy to remove these roots of unity. The key observation is that we need not go into details of each group:
one natural representative is sufficient to reconstruct the whole group of terms
by using the operators $\mathcal{Z}_{y_j}$ defined as follows.

\begin{defn}\label{defn-Z_y}
Define the operators $\mathcal{Z}_{y_j}$ acting on fractional series by
\begin{equation}
  \mathcal{Z}_{y_j}\sum_{\kappa\in\Q^n}a_{\kappa}\y^{\kappa}:=\sum_{\kappa\in\Q^{n},\kappa_j\in \Z}a_{\kappa}\y^{\kappa},
\end{equation}
where the coefficients $a_{\kappa}$ are free of the $y$ variables.
In other words, $\mathcal{Z}_{y_j}$ extracts all terms with integer exponents of $y_j$.
\end{defn}

Let $J$ be a $k$-subset of $[n]$. We will consider a rational function
 $E\langle J\rangle$ of the form
\begin{align}\label{e-integer-form}
E\langle J\rangle= \frac{L(\lambda)}{\prod_{j=1}^n (1-u_j \lambda^{b_j})},
\end{align}
where $b_j \in \Q$ and the denominator factors are coprime to each other.
The notation $\langle J\rangle$ indicates that $u_j=0$ for $j\in J$, and thus the $j$-th denominator factor is replaced by $1$ (we treat $b_j=0$).

\begin{defn}\label{defn-CT-Z}
Suppose $E\langle J\rangle$ is given as in \eqref{e-integer-form}.
If $b_j\in \Q$ is nonzero and the variable $y$ only appears at $u_j=yu_j'$, then define
$$ \CT_{\lambda, j} E\langle J\rangle={\CT_\lambda}^{(y)} \frac{1}{\underline{1-u_j \lambda^{b_j}}} E\langle J\rangle(1-u_j \lambda^{b_j}):=\sgn(b_j) \mathcal{Z}_{y} \left(E\langle J\rangle (1-u_j \lambda^{b_j}) \right)\Big|_{\lambda= (yu_j')^{-1/{b_j}}}.$$
In other words, it is obtained, up to a sign, by applying the $\mathcal{Z}_{y}$ operator to a fractional rational function of the form $E\langle J\cup \{j\}\rangle$,
which comes from $E\langle J\rangle$ by first replacing $y$ with $0$, then replacing $\lambda$ with $(yu_j')^{-1/{b_j}}$.
\end{defn}
Note: The first notation depends on the order of the denominator factors, while the second (underlined) notation does not.
See the following example which clarifies the notation.
\begin{exam}
Let $E\langle \{2\}\rangle=\frac{\lambda_1^{3}\lambda_2}{(1-\lambda_1^{-2}\lambda_2^{-2}y_1)(1)(1-\lambda_1^{3}\lambda_2y_3)(1-\lambda_1y_4)}$. Then we have
\begin{align*}
 \CT\limits_{\lambda_1,3}E\langle \{2\}\rangle &={\CT_{\lambda_1}}^{(y_3)} \frac{\lambda_1^{3}\lambda_2}{(1-\lambda_1^{-2}\lambda_2^{-2}y_1)(1)\underline{(1-\lambda_1^{3}\lambda_2y_3)}(1-\lambda_1y_4)}\\
 &={\CT_{\lambda_1}}^{(y_3)}  \frac{\lambda_1^{3}\lambda_2}{(1-\lambda_1^{-2}\lambda_2^{-2}y_1)\underline{(1-\lambda_1^{3}\lambda_2y_3)}(1)(1-\lambda_1y_4)}=\CT\limits_{\lambda_1,2}E'\langle\{3\}\rangle,
\end{align*}
where $E'$ is obtained from $E$ by exchanging the second and third denominator factor.
\end{exam}

Now we show that Definition \ref{defn-CT-Z} naturally extends Definition \ref{defn-A} in the following sense.
\begin{prop}\label{prop-singleCT}
Suppose $E\langle J\rangle$ is given as in \eqref{e-integer-form} with $y$ appearing only at $u_j=yu_j'$. If $E\langle J\rangle \big|_{\lambda\to \lambda^m}$ is integral in $\lambda$ for a positive integer  $m\in \mathbb{P}$,
then $$\CT_{\lambda, j} E\langle J\rangle \big|_{\lambda\to \lambda^m} = \CT_{\lambda, j} E\langle J\rangle$$
is independent of $m$.

Moreover, if $E\langle J\rangle $ is integral in $\lambda$, then
$$\CT_\lambda \frac{1}{\underline{1-u_j \lambda^{b_j}}} E\langle J\rangle(1-u_j \lambda^{b_j})=\CT_{\lambda, j}E\langle J\rangle.$$
\end{prop}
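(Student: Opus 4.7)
The plan is to dispatch the two assertions independently, treating the second (substantive) claim first. Part 2 identifies the $\mathcal{Z}_y$-based Definition \ref{defn-CT-Z} with the partial-fractions contribution $A_j(0)$ from Definition \ref{defn-A}, while Part 1 is an intrinsicness check showing the definition is unaffected by the change of variable $\lambda\to\lambda^m$.

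For Part 2, I would begin by setting $F(\lambda):=E\langle J\rangle(1-u_j\lambda^{b_j})$. Since $y$ appears in $E\langle J\rangle$ only at $u_j=yu_j'$, cancelling the $j$-th denominator factor removes all $y$-dependence, so the Laurent expansion $F(\lambda)=\sum_k c_k\lambda^k$ has coefficients $c_k$ free of $y$. Applying Lemma \ref{lem-unit-E} to the distinguished factor yields
\[
A_j(0)=\frac{1}{b_j}\sum_{\zeta^{b_j}=1}F\bigl((u_j^{1/b_j}\zeta)^{-1}\bigr)=\frac{1}{b_j}\sum_k c_k\,(yu_j')^{-k/b_j}\sum_{\zeta^{b_j}=1}\zeta^{-k}.
\]
The orthogonality relation $\sum_{\zeta^{|b_j|}=1}\zeta^{-k}=|b_j|$ if $|b_j|\mid k$ and $0$ otherwise collapses this to the subsum over $k$ with $b_j\mid k$, which are precisely the terms whose exponent $-k/b_j$ of $y$ in the substitution is an integer. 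The remaining prefactor is $|b_j|/b_j=\sgn(b_j)$, so the expression equals $\sgn(b_j)\,\mathcal{Z}_y[F(\lambda)|_{\lambda=(yu_j')^{-1/b_j}}]$, matching Definition \ref{defn-CT-Z}.

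For Part 1, set $\widetilde E:=E\langle J\rangle|_{\lambda\to\lambda^m}$; its $j$-th denominator factor is $(1-u_j\lambda^{mb_j})$, so the definition gives
\[
\CT_{\lambda,j}\widetilde E=\sgn(mb_j)\,\mathcal{Z}_y\bigl[\bigl(E\langle J\rangle(1-u_j\lambda^{b_j})\bigr)|_{\lambda\to\lambda^m}\bigr]\big|_{\lambda=(yu_j')^{-1/(mb_j)}}.
\]
Since $m>0$, $\sgn(mb_j)=\sgn(b_j)$; composing the two substitutions $\lambda\to\lambda^m$ and $\lambda=(yu_j')^{-1/(mb_j)}$ sends $\lambda^m$ to $(yu_j')^{-1/b_j}$, turning the bracketed expression into $E\langle J\rangle(1-u_j\lambda^{b_j})|_{\lambda=(yu_j')^{-1/b_j}}$. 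Both sides therefore reduce to the same quantity.

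The only point needing care is sign bookkeeping when $b_j<0$: the orthogonality sum produces $|b_j|$, not $b_j$, and the residual ratio $|b_j|/b_j=\sgn(b_j)$ is exactly what matches the sign factor in Definition \ref{defn-CT-Z}. Once this is tracked, no real obstacle remains, as both halves reduce to routine algebraic identities interpreting the roots-of-unity projection as the $\mathcal{Z}_y$ operator.
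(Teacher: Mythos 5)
Your argument is correct and follows essentially the same route as the paper: Part 2 is established by expanding $F(\lambda)=E\langle J\rangle(1-u_j\lambda^{b_j})$ as a Laurent series (whose coefficients are free of $y$), applying Lemma \ref{lem-unit-E} (i.e.\ Equation \eqref{equ-addifomula-A}), and invoking root-of-unity orthogonality to recover $\sgn(b_j)\mathcal{Z}_y$, while Part 1 is the same direct substitution check showing $\lambda\to\lambda^m$ followed by $\lambda=(yu_j')^{-1/(mb_j)}$ collapses to $\lambda=(yu_j')^{-1/b_j}$. Reordering the two halves changes nothing substantive.
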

\begin{proof}
The first part is checked directly as follows:
\begin{align*}
\CT_{\lambda, j} E\langle J\rangle\big|_{\lambda\to \lambda^m} &= \sgn(mb_j)\mathcal{Z}_{y} \left(E\langle J\rangle \big|_{\lambda= \lambda^m} (1-u_j'y \lambda^{mb_j}) \right)\Big|_{\lambda =  (yu_j')^{-\frac{1}{mb_j}}}\\
&=\sgn(b_j)\mathcal{Z}_{y} \left(E\langle J\rangle (1-u_j \lambda^{b_j}) \right)\Big|_{\lambda=  (yu_j')^{-\frac{1}{b_j}}}.
\end{align*}

For the second part,  denote $E\langle J\rangle \cdot (1-u_j \lambda^{b_j})$ by $H(\lambda)$ for simplicity.
Expand $H(\lambda)$ as a series $\sum_{\kappa \in \mathbb{Z}} h_{\kappa} \lambda^{\kappa}$, where $h_{\kappa}$ are independent of $\lambda$ and $y$. By applying \eqref{equ-addifomula-A},
we can write the left-hand side as
\begin{align*}
  A_j(0)&
 =\frac{1}{b_j} \sum_{\zeta: \zeta^{b_j}=1}  \left(\sum_{\kappa\in\Z}h_{\kappa}\lambda^{\kappa} \right)\Bigg|_{\lambda=((yu')^{\frac{1}{b_j}}\zeta)^{-1}}
 =\frac{1}{b_j} \sum_{\zeta: \zeta^{b_j}=1} \sum_{\kappa\in\Z}h_{\kappa}((yu')^{\frac{1}{b_j}}\zeta)^{-\kappa}\\
&=\sum_{\kappa\in\Z}h_{\kappa} (yu_j')^{-\kappa/{b_j}}\cdot\frac{1}{b_j}\sum_{\zeta: \zeta^{b_j}=1}\zeta^{-\kappa}=\sgn(b_j)\sum_{\kappa\in\Z}\chi(b_j|\kappa)\cdot h_{\kappa} (yu_j')^{-\frac{\kappa}{b_j}},
\end{align*}
where the last equality follows from the fact that $\frac{1}{b_j} \sum_{\zeta: \zeta^{b_j} = 1} \zeta^{-\kappa} = \sgn(b_j) \cdot \chi(b_j|\kappa)$. Here, for any statement $D$, $\chi(D)$ is 1 if $D$ is true and 0 if $D$ is false.

Thus, we get
$$A_j(0)=\sgn(b_j)\mathcal{Z}_{y}H(\lambda)\Big|_{\lambda= (yu_j')^{-\frac{1}{b_j}}}=\CT_{\lambda, j} E\langle J\rangle,$$
as desired.
\end{proof}
Next, we show that Proposition \ref{p-partialfraction} naturally extends as follows.
\begin{cor}\label{cor-contri-dual}
Suppose $E\langle J\rangle$ is a fractional rational function given as in \eqref{e-integer-form}  where $y_j$ appears only at $u_j=y_ju_j'$.
If $E\langle J\rangle$ is \emph{proper in $\lambda$}, then
\begin{equation}\label{e-contri}
  \CT_\lambda E\langle J\rangle = \sum_{u_j \lambda^{b_j} <1}\CT_{\lambda, j} E\langle J\rangle;
\end{equation}

If $E\langle J\rangle|_{\lambda=0}=\lim_{\lambda\rightarrow 0}$ exists, i.e., $E\langle J\rangle$ has a non-positive pole order in $\lambda$, then
\begin{equation}\label{e-dual-contri}
  \CT_\lambda E\langle J\rangle
=E\langle J\rangle\big|_{\lambda=0} - \sum_{u_j \lambda^{b_j}>1}\CT_{\lambda, j} E\langle J\rangle.
\end{equation}
\end{cor}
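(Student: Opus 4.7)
The strategy is to reduce to the integer-exponent setting where Proposition \ref{p-partialfraction} applies, then translate the partial-fraction coefficients $A_j(0)$ back into the operators $\CT_{\lambda, j}$ via Proposition \ref{prop-singleCT}.

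Choose a positive integer $m$ with $mb_j \in \mathbb{Z}$ for every $j \notin J$, and consider $E\langle J\rangle\big|_{\lambda \to \lambda^m}$, which is now integral in $\lambda$. The substitution $\lambda \mapsto \lambda^m$ is a field embedding of $G$ that preserves several pieces of data simultaneously: (i) the constant term operator, so $\CT_\lambda E\langle J\rangle = \CT_\lambda E\langle J\rangle\big|_{\lambda \to \lambda^m}$; (ii) the conditions $\deg_\lambda E\langle J\rangle < 0$ and $\ord_\lambda E\langle J\rangle \leq 0$, both of which simply scale by $m$; (iii) the limit value $E\langle J\rangle\big|_{\lambda = 0}$; and (iv) the smallness classification $u_j\lambda^{b_j} < 1$ versus $u_j\lambda^{b_j} > 1$, since smallness in the iterated Laurent series field $G$ is determined by the sign of the first nonzero entry of the exponent vector, which is unaffected by replacing $b_j$ with $mb_j$.

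Now apply Proposition \ref{p-partialfraction} to $E\langle J\rangle\big|_{\lambda \to \lambda^m}$. In the proper case, \eqref{equ-contri} gives
\[
\CT_\lambda E\langle J\rangle = \sum_{u_j\lambda^{b_j} < 1} A_j(0),
\]
where each $A_j(0)$ is the partial-fraction coefficient associated to the factor $(1 - u_j\lambda^{mb_j})$ and is computable by Lemma \ref{lem-unit-E}. The second part of Proposition \ref{prop-singleCT} identifies $A_j(0) = \CT_{\lambda, j} E\langle J\rangle\big|_{\lambda \to \lambda^m}$, and the first part then rewrites this as $\CT_{\lambda, j} E\langle J\rangle$, yielding \eqref{e-contri}. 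Formula \eqref{e-dual-contri} is obtained by the same argument with \eqref{equ-dual-contri} in place of \eqref{equ-contri}, using (iii) to move the boundary term through the substitution.

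The main delicate point I expect is that Proposition \ref{p-partialfraction}, as stated, assumes the positive form \eqref{e-positive-form} with positive $\lambda$-exponents, whereas after substitution some factors may still have $mb_j < 0$. One handles this by flipping, $(1 - u_j\lambda^{mb_j}) = -u_j\lambda^{mb_j}(1 - u_j^{-1}\lambda^{-mb_j})$, and absorbing the monomial into $L(\lambda)$; the equality recorded in Definition \ref{defn-A} guarantees that the partial-fraction coefficient of the flipped factor equals the $A_j(0)$ produced directly by Lemma \ref{lem-unit-E} from $(1 - u_j\lambda^{mb_j})$, so the identification with $\CT_{\lambda, j} E\langle J\rangle$ and the matching of the contributing/dually-contributing indexing survive the rewriting intact.
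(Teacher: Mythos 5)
Your proof is correct and follows essentially the same route as the paper's: substitute $\lambda\to\lambda^m$ to clear the fractional exponents, apply Proposition \ref{p-partialfraction} to the resulting integral rational function, and then use both parts of Proposition \ref{prop-singleCT} to identify the partial-fraction coefficients $A_j(0)$ with $\CT_{\lambda,j}E\langle J\rangle$. The paper's proof is terser and leaves implicit the bookkeeping you spell out in points (i)--(iv) and the sign-flipping remark, but the underlying argument is the same.
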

\begin{proof}
Choose an $m\in \mathbb{P}$ such that $E\langle J\rangle \big|_{\lambda\to \lambda^m}$ is integral in $\lambda$. Denote this rational function by $E'\langle J\rangle$. Apply Proposition \ref{prop-singleCT} with respect to $E'\langle J\rangle$. It follows that
$\CT_{\lambda, j} E'\langle J\rangle$ naturally corresponds to the contribution of the $j$-th denominator factor in $E'\langle J\rangle$.
Applying Proposition \ref{p-partialfraction} gives
\begin{align*}
\CT_\lambda E\langle J\rangle=\CT_\lambda E'\langle J\rangle=
\begin{cases}
   \sum_{u_j \lambda^{b_j} <1}\CT_{\lambda, j} E'\langle J\rangle, & \mbox{if $E'\langle J\rangle$ is \emph{proper in $\lambda$}}; \\
   E'\langle J\rangle|_{\lambda=0}-\sum_{u_j \lambda^{b_j} >1}\CT_{\lambda, j} E'\langle J\rangle, &\mbox{if  $E'\langle J\rangle|_{\lambda=0}$ \emph{exists}}.
\end{cases}
\end{align*}
The corollary then follows since $\CT_{\lambda, j} E'\langle J\rangle=\CT_{\lambda, j} E\langle J\rangle$ by
Proposition \ref{prop-singleCT}.
\end{proof}

We also need some properties of the $\mathcal{Z}_{y_i}$ operators for later sections.
In the definition, $\mathcal{Z}_{y_i}$ commutes with $\mathcal{Z}_{y_j}$ so that $\mathcal{Z}_{\y}$ acts on a set of variables.

It is easy to see that $\mathcal{Z}_{y_j}$ is linear in the following sense.
\begin{itemize}
  \item [1.] For fractional series $\{F_i(\y)\}_{1\leq i \leq T}$, we have $\mathcal{Z}_{y_j}\sum_{i=1}^T F_i(\y)=\sum_{k=1}^T\mathcal{Z}_{y_j}F_i(\y)$.

  \item [2.] If the fractional series $P(\y)$ contains only integer exponents of $y_j$, then we have
  $$\mathcal{Z}_{y_j}(P(\y)\cdot F(\y))=P(\y)\cdot\mathcal{Z}_{y_j}F(\y).$$
  \end{itemize}

The following proposition describes the action of the operator $\mathcal{Z}_{\y}$ on Elliott-rational functions.
\begin{prop}\label{prop-Z_y}
The operator $\mathcal{Z}_{\y}$ maps a fractional Elliott-rational function to another Elliott-rational function.
The result can be obtained through a finite number of steps and is \textrm{independent} of the series expansion of the rational function.
\end{prop}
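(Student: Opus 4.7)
The plan is to reduce the multi-variable assertion to the single-variable case, and then to give a closed-form expression for $\mathcal{Z}_y$ via a roots-of-unity average which is manifestly finite and purely algebraic. Since the operators $\mathcal{Z}_{y_i}$ commute with each other (Definition \ref{defn-Z_y}), and $\mathcal{Z}_{y_i}$ acts on $F$ by treating the other variables $y_k$ ($k\neq i$) as parameters in the coefficient field, it suffices to prove the statement for a single operator $\mathcal{Z}_y$.

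Let $F$ be a fractional Elliott-rational function, written in proper form $F = L/\prod_{i=1}^n(1-M_i)$, and choose $N\in\mathbb{P}$ such that every exponent of $y$ occurring in $L$ and in each $M_i$ lies in $\tfrac{1}{N}\Z$. The identity I would establish is
\begin{equation*}
\mathcal{Z}_y F \;=\; \frac{1}{N}\sum_{\zeta:\,\zeta^N=1} F\Big|_{\,y^{1/N}\mapsto\, \zeta\, y^{1/N}}.
\end{equation*}
To verify this, expand $F$ as its unique series $\sum_{k\in\Z} a_k\, y^{k/N}$ in $G$; the substitution multiplies the $k$-th term by $\zeta^k$, and the orthogonality relation $\tfrac{1}{N}\sum_{\zeta^N=1}\zeta^k = \chi(N\mid k)$ (the same indicator appearing in the proof of Proposition \ref{prop-singleCT}) isolates exactly the terms with integer exponents of $y$.

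Each summand on the right-hand side is again Elliott-rational: $L$ is sent to another fractional Laurent polynomial, and each denominator factor $1-M_i$ becomes $1-\zeta^{f_i}M_i$, where $f_i\in\Z$ is the numerator of the $y^{1/N}$-exponent in $M_i$. The smallness condition $M_i<1$ in the ordering of $G$ is preserved, because the unit $\zeta^{f_i}$ does not alter the leading exponent vector of $M_i$. Consequently $\mathcal{Z}_y F$ is exhibited as a sum of $N$ Elliott-rational functions, which can be combined over a common denominator in finitely many algebraic steps. Independence from the series expansion follows immediately, since the displayed identity is an equality of \emph{rational functions} in the $y^{1/N}$'s, so its right-hand side depends only on $F$ as a rational function, not on any chosen series expansion.

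The main obstacle I anticipate is a clean verification that the substitution preserves the proper-form condition $M_i<1$ in the iterated Laurent setting, and that different valid choices of $N$ produce the same final Elliott-rational function. The former is handled by observing that the ordering on $G$ depends only on leading exponent vectors, which are invariant under multiplication by the unit $\zeta^{f_i}$; the latter follows from a direct refinement argument, since any two admissible $N$'s share a common multiple $N'$, and the averaging formulas agree when one regroups the $N'$-th roots of unity according to their $N$-th powers.
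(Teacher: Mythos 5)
Your proof is correct, and it takes a genuinely different route from the paper's. The paper works directly with the $n$-variable operator $\mathcal{Z}_{\y}$: it rewrites each denominator factor as $\frac{1}{1-u_i\y^{c_i}} = \frac{\sum_{j=0}^{p_i-1}(u_i\y^{c_i})^j}{1-(u_i\y^{c_i})^{p_i}}$, chosen so the new denominator has only integer exponents; then the linearity properties of $\mathcal{Z}_{\y}$ let it pass through the denominator and act only on the (fractional Laurent polynomial) numerator, where finiteness is immediate. Your approach instead reduces to a single variable and uses the classical multisection (roots-of-unity averaging) identity $\mathcal{Z}_y F = \frac{1}{N}\sum_{\zeta^N=1} F|_{y^{1/N}\mapsto \zeta y^{1/N}}$. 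Both are sound. What your formula buys is an explicit, manifestly finite closed-form expression for $\mathcal{Z}_y F$ as a rational function, from which independence of the series expansion is immediate; it also makes the analogy with the orthogonality computation in Proposition \ref{prop-singleCT} transparent. What the paper's route buys is that it avoids introducing roots of unity altogether — the rewriting stays inside the original coefficient field, and one sees the Elliott-rational structure of the output directly without needing the Galois/symmetry observation that the averaged expression has rational coefficients (or the identity $\prod_{\zeta^N=1}(1-\zeta^{f}M) = (1-M^{N/\gcd(N,f)})^{\gcd(N,f)}$ to clear them from the denominator). Both arguments require the same observation you make, namely that smallness $M_i<1$ depends only on the exponent vector and is thus preserved; the paper's version of this observation is that $(u_i\y^{c_i})^{p_i}$ being large or small does not matter since its expansion has only integer powers either way.
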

\begin{proof}
Let $F(\y)$ be a fractional Elliott-rational function. It can be written as
\begin{equation*}
F(\y)=\frac{L(\y)}{\prod_{i=1}^{n}(1-u_i\y^{c_i})},
\end{equation*}
where $c_i \in \Q^n$, $u_i$ is independent of $\y$, and $L(\y)$ is a fractional Laurent polynomial. Such $F(\y)$ is identified with its unique series expansion in the field $G$,
so that $\mathcal{Z}_{\y}$ acts on $F(\y)$.

We rewrite $F(\y)$ as
$$L(\y)\cdot\prod_{i=1}^{n}\frac{\sum_{j=0}^{p_i-1}(u_i\y^{c_i})^{j}}{1-(u_i\y^{c_i})^{p_i}},$$
where $p_i$ are positive integers such that $p_ic_i$ are integral for all $i$. We can further assume the entries of $p_ic_i$ have the greatest common divisor $1$ for each $i$.

In the field $G$, the series expansion of
$(1-(u_i\y^{c_i})^{p_i})^{-1}$ contains only integer powers of $\y$, regardless of whether $(u_i\y^{c_i})^{p_i}$ is large or small in $G$. Thus, by
the linear property of $\mathcal{Z}_{\y}$, we have
$$\mathcal{Z}_{\y}F(\y)=\mathcal{Z}_{\y}\Big(L(\y)\cdot\prod_{i=1}^{n}\frac{\sum_{j=0}^{p_i-1}(u_i\y^{c_i})^{j}}{1-(u_i\y^{c_i})^{p_i}}\Big)=\frac{\mathcal{Z}_{\y}\big(L(\y)\cdot\prod_{i=1}^{n}\big(\sum_{j=0}^{p_i-1}(u_i\y^{c_i})^{j}\big)\big)}{\prod_{i=1}^{n}(1-(u_i\y^{c_i})^{p_i})}.$$
This is an Elliott rational function since the numerator is obtained by applying $\mathcal{Z}_{\y}$ to a fractional Laurent polynomial,
which takes only a finite number of steps and results in a Laurent polynomial.

Finally, observe that the resulting Elliott rational function is independent of its series expansion in $G$. This completes the proof.
\end{proof}

\section{Cone Decomposition from Constant Term Extraction}\label{sec:CT->Condec}
In this section, we establish the relationship between specific constant terms and cones.
Our starting point is the constant term formula defined in \eqref{equ-inhomo}.
By iterative application of Proposition \ref{prop-singleCT}, we obtain a sum of the form $\sum_i s_i R_i,$
where $s_i\in \Z$ and each $R_i$ is a sum of similar rational functions involving fractional powers and several roots of unity.
Dealing with these roots of unity was considered challenging \cite[Remark 4.3]{2007A}. We show that
$R_i$ is associated with a nonzero $r\times r$ minor of $A$ and corresponds to a shifted simplicial cone.

The single equation case has been addressed in \cite{xin2024polynomial}. Here, we will give a rigourous proof of the general case.
\subsection{Matrix Forms}
We use a slight variation of Stanley's notation. We start with
\begin{equation}\label{equ-M}
M=\left(\begin{array}{cc}
         \textit{id}_n& \0\\
         A            &-\b
        \end{array}\right)=(m_{i,j})_{(n+r)\times {(n+1)}}, \text{~where $\textit{id}_n$ is the identity matrix of order $n$}.
\end{equation}
This matrix encodes the rational function in \eqref{equ-inhomo}.

We successively perform \emph{elementary operations} as follows.
For $1\leq j\leq n$, we define $M\leftarrow (i,j)$ to be the matrix obtained from $M$ by Gaussian column elimination
with the $(n+i,j)$th entry of $M$ as the \emph{pivot}, i.e., adding
$-\frac{m_{n+i,s}}{m_{n+i,j}}$ multiple of the $j$th column to the $s$th column for each $s\neq j$. We call the nonzero entry $m_{n+i,j}$ the \emph{pivot item}, and we will ignore row $n+i$ and column $j$
in further operations.
More generally, recursively define
$$M\leftarrow ((i_1,\dots, i_k);(j_1,\dots, j_k))= M \leftarrow (i_1,j_1)\leftarrow \cdots \leftarrow (i_k,j_k).$$
The pivot (item) sequence $(p_1,p_2,\dots, p_k)$ records the pivot items in the above process, i.e., $p_\ell$ is the
$(n+i_{\ell},j_\ell)$ entry of $M\leftarrow ((i_1,\dots, i_{\ell-1});(j_1,\dots, j_{\ell-1}))$. Note that $p_1\cdots p_k$ is the
corresponding $k\times k$ minor of $A$, which is denoted by $A\begin{pmatrix}
i_1,\dots,i_{k}\\
j_1,\dots,j_{k}
\end{pmatrix}$. In other words, it is the determinant of the matrix whose $(s,t)$ entry is the $(i_s,j_t)$ entry of $A$.

The next result simplifies the notation.
\begin{prop}\label{theo-change}
Suppose $M=\begin{pmatrix}
         \textit{id}_n& \0\\
         A            &-\b
        \end{pmatrix}$. Then for any $k\in[r]$, the matrix
        $$M\leftarrow ((i_1,\dots, i_k);(j_1,\dots, j_k))$$
has its first $n-k$ column vectors linearly independent.

Moreover, for any
permutations $\pi$ and $\pi'$ on $1,2,\dots, k$, we have
 $$M\leftarrow ((i_1,\dots, i_k);(j_1,\dots, j_k))= M\leftarrow ((i_{\pi(1)},\dots, i_{\pi(k)});(j_{\pi'(1)},\dots, j_{\pi'(k)})),$$
if both sides are well-defined and understood as ignored rows and columns removed.
\end{prop}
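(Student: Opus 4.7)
The plan is to prove both parts by expressing $M_k := M\leftarrow((i_1,\dots,i_k);(j_1,\dots,j_k))$ as $M\cdot T$ for an invertible matrix $T$ whose relevant columns are read off from the constraints on $M_k$ rather than from the pivoting order. Throughout I write $A[I,J]$ for the submatrix of $A$ with row set $I$ and column set $J$, with analogous notation for single-column or single-index restrictions.

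For part~(1), each elementary step behind an arrow $\leftarrow(i,j)$ is of the form ``add a scalar multiple of column $j$ to another column''. Such operations preserve linear independence of any selection of columns of the working matrix. Since the top block of $M$ is $\textit{id}_n$, the first $n$ columns of $M$ are linearly independent, hence so are the first $n$ columns of $M_k$. The $n-k$ non-ignored columns indexed by $[n]\setminus\{j_1,\dots,j_k\}$ are a subset of those, and are therefore linearly independent.

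For part~(2), set $I=\{i_1,\dots,i_k\}$ and $J=\{j_1,\dots,j_k\}$. A short induction on $k$ yields two structural facts: (a)~$M_k = M\cdot T$ for some $T\in\Q^{(n+1)\times(n+1)}$ with $T_{t,t}=1$ for all $t$ and $T_{l,s}=0$ whenever $l\notin J\cup\{s\}$, because every arrow adds multiples only of columns indexed by $J$; and (b)~after all $k$ pivots the row $n+i_m$ of $M_k$ has entry $p_m$ in column $j_m$ and zero elsewhere, since once this row is cleared at step $m$, later steps add multiples of columns $j_{m+1},\dots,j_k$ whose own entries in row $n+i_m$ are already zero. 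Combining (a) and (b), for each $s\notin J$ the vector $T[J,s]=(T_{l,s})_{l\in J}$ is the unique solution of
\begin{equation*}
A[I,J]\,T[J,s] = -A[I,s]\quad(s\in[n]),\qquad A[I,J]\,T[J,n+1] = \b[I],
\end{equation*}
uniqueness following from $p_1\cdots p_k = \pm\det A[I,J]\neq 0$, which is guaranteed by the assumed well-definedness of both sides. Therefore
\begin{equation*}
(M_k)_{:,s} \;=\; M_{:,s} + \sum_{l\in J}T_{l,s}\,M_{:,l}
\end{equation*}
depends only on the \emph{sets} $I$ and $J$, and deleting the $k$ rows indexed by $n+I$ together with the $k$ columns indexed by $J$ produces the same reduced matrix for any permutations $\pi,\pi'$, as required.

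The main obstacle is fact~(a): one must verify that right-multiplication by the cumulative arrow matrix lies in the subgroup whose off-diagonal support is contained in rows indexed by $J$. This reduces to a direct computation showing that each arrow is right-multiplication by a matrix of the form $I+\sum_{s\neq j_m}\alpha_s E_{j_m,s}$ (off-diagonal support in row $j_m$) and that a product of such matrices across $m=1,\dots,k$ keeps its off-diagonal support inside rows $J$. Once (a) is in place, the order-dependent elimination collapses to a single invertible linear system whose data is manifestly symmetric in the orderings, completing the proof.
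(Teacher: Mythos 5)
Your part~(2) argument captures the same core idea as the paper's proof: the cumulative column transformation is pinned down by the requirement that rows $n+I$ of $M_k$ be cleared outside the ignored columns, and this makes the non-ignored columns of $M_k$ depend only on the \emph{sets} $I,J$. The paper realizes this concretely by reindexing WLOG to $(i_s,j_s)=(s,s)$, writing $A$ in $2\times2$ block form, and exhibiting the cumulative column operation explicitly, arriving at the closed formula \eqref{equ-Mk}; this formula also yields part~(1) directly from the $\textit{id}_{n-k}$ block. Your version avoids the reindexing and the block computation by isolating the two structural facts (a) and (b) about the arrow matrices (which the paper leaves implicit) and then reading off $T$ as the unique solution of a linear system governed by the invertible minor $A[I,J]$; this is a little more abstract but arguably cleaner, since it never needs a WLOG normalization. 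The two routes are close in spirit, with yours being more explicit about the unstated structural assumptions.

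Your part~(1) argument, however, rests on a false general principle. It is \emph{not} true that operations of the form ``add a multiple of column $j$ to another column'' preserve linear independence of an arbitrary subselection of columns: in the $2\times 3$ matrix with columns $(1,0)^t$, $(0,1)^t$, $(1,0)^t$, adding $-1$ times column $3$ to column $1$ makes columns $1$ and $2$ dependent. Your conclusion is still correct, but it needs the specific structure here: the pivot column $j_m$ always lies in $[n]$, so the off-diagonal support of the cumulative transformation $T$ lies in rows indexed by $J\subseteq[n]$, and in particular row $n+1$ of $T$ is the last standard basis vector. It follows that the first $n$ columns of $M_k$ equal the first $n$ columns of $M$ right-multiplied by the principal $n\times n$ block of $T$, which is invertible (since $T$ is a product of invertible elementary matrices and is block upper-triangular with a $1$ in the $(n+1,n+1)$ corner), and this preserves their rank. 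Without the observation that pivots never occur in the last column, the step ``hence so are the first $n$ columns of $M_k$'' is not justified.
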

\begin{proof}
  Without loss of generality, we may assume that $(i_s,j_s)=(s,s)$, that is, to prove the first $n-k$ column vectors of $M\leftarrow ([k];[k])$ are
   linearly independent and
$$M\leftarrow ((1,\dots, k);(1,\dots, k))=M\leftarrow((\pi(1),\dots, \pi(k));(\pi'(1),\dots, \pi'(k))).$$

Write $A=
\begin{pmatrix}
A_1&A_2\\
A_3&A_4\\
\end{pmatrix}$, where $A_1\in \Z^{k\times k}$ is nonsingular.
Then $M=\begin{pmatrix}
\textit{id}_k &\0      &\0 \\
\0   & \textit{id}_{n-k}&\0\\
A_1 &   A_2  &-\b_1\\
A_3 &A_4&-\b_2
\end{pmatrix}$
and after column operations,  $A_2$ and $\b_1$ become $\0$.
The column operations are equivalent to adding the $U$ (right) multiple of the left block
to the middle block and adding the $\mathbf{q}$ (right) multiple of the left block to the right block, where $U$ is a matrix such that $A_2+A_1 U$ is the zero matrix and $\mathbf{q}$ is a column vector such that $-\b_1+A_1\mathbf{q}$ is a zero vector.
It follows that $U=-{A_1}^{-1}A_2,~\mathbf{q}={A_1}^{-1}\b_1$ and
\begin{equation}\label{equ-Mk}
M\leftarrow(   (\pi(1),\dots, \pi(k) );( \pi'(1),\dots, \pi'(k) )  )=\begin{pmatrix}
-{A_1}^{-1}A_2&{A_1}^{-1}\b_1\\
\textit{id}_{n-k}&\0\\
A_4-A_3{A_1}^{-1}A_2&A_3A_1^{-1}\b_1-\b_2
\end{pmatrix}
\end{equation}
is independent of $\pi$ and $\pi'$.
The column vectors (excluding the last column) are linearly independent due to the $\textit{id}_{n-k}$.
This completes the proof.
\end{proof}

For convenience, we denote the matrix $M\leftarrow ((i_1,\dots, i_k);(j_1,\dots, j_k))$ by $M_k\langle I;J\rangle$, where we use $\langle I;J\rangle=\langle\{i_1,\dots, i_k\};\{j_1,\dots,j_k\}\rangle $ to emphasize that
rows $n+I$ and columns $J$ of $M_k\langle I;J\rangle$ are ignored.

Next, we define
\begin{equation}\label{equ-Psi}
[M_k\langle I;J\rangle]=\frac{(\y,\Lambda)^{\gamma_{n+1}}}{(1-(\y,\Lambda)^{\gamma_1})\cdots (1-(\y,\Lambda)^{\gamma_n})},
\end{equation}
where the $(n+1)$th column $\gamma_{n+1} \in \Q^{n+r}$ of $M_k\langle I;J\rangle$ is converted to the numerator $(\y,\Lambda)^{\gamma_{n+1}}$ and the $j$th ($1\leq j\leq n$) column $\gamma_j \in \Q^{n+r}$ of $M_k\langle I;J\rangle$ is converted to the $j$th ($1\leq j\leq n$) denominator factor according to whether
$j$ belongs to $J$ or not: the factor is $1-(\y,\Lambda)^{\gamma_j}$ if
$j\not\in J$ and is $1$ if $j\in J$.
The $M_k\langle I;J\rangle$ and $[M_k\langle I;J\rangle]$ will be referred to as the matrix form and the Erat form, respectively.
It should be clear that $[M_k\langle I;J\rangle]$ is free of $\lambda_i, \ i\in I$.
In particular, if $k=r$ then $[M_r\langle [r];J\rangle]$ is free of all the $\lambda_i$'s,
so we shall treat $M_r\langle J\rangle$ (short for $M_r\langle [r];J\rangle$) as rows $n+[r]$ and columns $J$ removed, and it will be seen to correspond to
an $n-r$ dimensional shifted simplicial cone in $\R^n$.

\subsection{Constant term extraction over matrix forms}
We first prove the commutativity between the operations $\CT_{\lambda_i,j}$ and $\mathcal{Z}_{\y}$.
 \begin{prop}\label{prop-commu}
 Follow the notation as above. If
$i \not\in I,~j\not\in J$, then we have
 \begin{equation}
\CT_{\lambda_i,j}\mathcal{Z}_{\y} [M_k\langle I;J\rangle]=\mathcal{Z}_{\y}\CT_{\lambda_i,j}[M_k\langle I;J\rangle].
\end{equation}
\end{prop}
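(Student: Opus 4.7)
The plan is to exploit the specific structure of $E = [M_k\langle I;J\rangle]$. The key observation I would prove first is that for $j\notin J$, the variable $y_j$ appears in $E$ only in the $j$-th denominator factor $(1-u_j\lambda_i^{b_j})$, and with integer exponent $1$. This follows from the $\textit{id}_{n-k}$ block appearing in the middle of $M_k\langle I;J\rangle$ (see equation \eqref{equ-Mk}): the row corresponding to $y_j$ has a $1$ in column $j$ and a $0$ in every other non-ignored column, and also $0$ in the numerator column. Consequently $\mathcal{Z}_{y_j}E = E$, so $\mathcal{Z}_{\y}E$ equals the projection onto integer $y_{j'}$-exponents for $j'\ne j$; the claim then reduces, by the idempotency of $\mathcal{Z}_{y_j}$ built into $\CT_{\lambda_i,j}$, to commuting $\CT_{\lambda_i,j}$ with $\prod_{j'\ne j}\mathcal{Z}_{y_{j'}}$.

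Writing $E = D/(1-u_j\lambda_i^{b_j})$ with $D = E(1-u_j\lambda_i^{b_j})$ free of $y_j$, and expanding $D = \sum_\beta d_\beta\lambda_i^\beta$ in $\lambda_i$, the formula in Definition \ref{defn-CT-Z} gives
\[
\CT_{\lambda_i,j}E \;=\; \sgn(b_j)\sum_{\mu\in\Z} d_{b_j\mu}\,(y_j u_j')^{-\mu}.
\]
For the other side, by Proposition \ref{prop-Z_y} the $j$-th denominator factor of $\mathcal{Z}_{\y}E$ is $(1-(u_j\lambda_i^{b_j})^{p_j})$ for the smallest positive integer $p_j$ making $p_j$ times the $\y$-exponent of $u_j'$ integral. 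Applying $\CT_{\lambda_i,j}$ to $\mathcal{Z}_{\y}E$---interpreted as multiplying by this actual $j$-th denominator factor, substituting $\lambda_i = (y_j u_j')^{-1/b_j}$ (which is a common root of all $p_j$ simple linear factors of $(1-(u_j\lambda_i^{b_j})^{p_j})$), and applying $\mathcal{Z}_{y_j}$---yields a double sum indexed by $\mu\in\Z$ and $k\in\{0,\ldots,p_j-1\}$, using the numerator rewriting $\mathcal{Z}_{\y}[D\,(1+u_j\lambda_i^{b_j}+\cdots+(u_j\lambda_i^{b_j})^{p_j-1})]$ from the proof of Proposition \ref{prop-Z_y}.

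The main obstacle will be verifying that this double sum collapses term by term to the $(u_j')$-sum above after applying the outer $\mathcal{Z}_{\y}$. The critical identity, for each $\mu\in\Z$, is
\[
\sum_{k=0}^{p_j-1}(u_j')^{-k-\mu}\,\mathcal{Z}_{\y}\bigl[d_{b_j\mu}(u_j')^k\bigr] \;=\; \mathcal{Z}_{\y}\bigl[d_{b_j\mu}(u_j')^{-\mu}\bigr],
\]
which reflects the fact that, among $k\in\{0,1,\ldots,p_j-1\}$, exactly one value $k_0$ makes $d_{b_j\mu}(u_j')^{k_0}$ have integer $\y$-exponents, and this $k_0$ is congruent to $-\mu\pmod{p_j}$. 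Uniqueness holds because every fractional $\y$-exponent appearing in $D$ lies in the cyclic subgroup of $(\Q/\Z)^{n}$ generated by the $\y$-exponent of $u_j'$; indeed, both of these fractional exponents originate from the same $A_1^{-1}$-block in equation \eqref{equ-Mk}, so they live in a compatible coset. Once this identity is established, matching coefficients of $y_j^{-\mu}$ on both sides completes the proof.
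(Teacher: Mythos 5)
Your approach is genuinely different from the paper's, but it has a gap that I don't think can be repaired as written. The paper proves the proposition by clearing $\lambda_k$ to integer powers, writing the partial fraction decomposition $E = P + p/\lambda_k^t + \sum_j A_j/(1-u_j\lambda_k^{a_j})$, applying $\mathcal{Z}_\y$ to each summand via Proposition \ref{prop-Z_y}, and observing that the result is \emph{again} a partial fraction decomposition (of $\mathcal{Z}_\y E$); the $k$-th contribution at $\lambda_k=0$ then matches on both sides. Your proposal instead computes both sides directly as Laurent series sums. The problem is the formula you use for $\CT_{\lambda_i,j}\mathcal{Z}_\y E$.

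After $\mathcal{Z}_\y$, the $j$-th denominator factor of $\mathcal{Z}_\y E$ is $1-(u_j\lambda_i^{b_j})^{p_j}$, in which $y_j$ appears with exponent $p_j$, not $1$. Definition \ref{defn-CT-Z} and Proposition \ref{prop-singleCT} both require $u_j = y\,u_j'$ with $y$-exponent exactly $1$; the conversion from the root-of-unity sum of Lemma \ref{lem-unit-E} to ``substitute and extract integer $y$-exponents'' relies precisely on the exponents $-\kappa/b_j$ being integral if and only if $b_j\mid\kappa$. With exponent $p_j$, the recipe ``multiply by the $j$-th factor, substitute $\lambda_i=(y_ju_j')^{-1/b_j}$, apply $\mathcal{Z}_{y_j}$'' extracts $\lambda_i$-exponents that are multiples of $b_j$ rather than of $p_j b_j$, so it computes the wrong thing. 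Concretely, for $E=\frac{1}{(1-\lambda y_1 y_2^{1/2})(1-\lambda^{-1}y_3)}$ one finds $\mathcal{Z}_\y E=\frac{1}{(1-\lambda^2 y_1^2 y_2)(1-\lambda^{-1}y_3)}$; your recipe gives $\frac{1}{1-y_1 y_2^{1/2}y_3}$, whereas Lemma \ref{lem-unit-E} applied directly to the factor $1-\lambda^2 y_1^2 y_2$ gives the correct answer $\frac{1}{1-y_1^2 y_2 y_3^2}$. The ``outer $\mathcal{Z}_\y$'' you append happens to repair this discrepancy in the example, but it is unmotivated in your argument --- you never establish that the recipe-plus-outer-$\mathcal{Z}_\y$ computes $\CT_{\lambda_i,j}\mathcal{Z}_\y E$.

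The structural claim you invoke to collapse the $k$-sum --- that every fractional $\y$-exponent of $D$ lies in the cyclic subgroup of $(\Q/\Z)^n$ generated by the $\y$-exponent of $u_j'$ --- is also false in general. Take, e.g., $A_1=2\,\textit{id}_2$ and $A_2$ with first two columns $e_1,e_2$: the columns of $-A_1^{-1}A_2$ include $(-\tfrac12,0)$ and $(0,-\tfrac12)$, and the latter is not in the cyclic group generated by the former. Fortunately this claim is not needed: once the outer $\mathcal{Z}_\y$ is in place, the $k$-sum collapses for the much simpler reason that $(u_j')^{-k-\mu}$ is a \emph{monomial} with a fixed fractional $\y$-exponent, so $\mathcal{Z}_\y\bigl[(u_j')^{-k-\mu}\cdot(\text{integer-exponent series})\bigr]$ vanishes unless $p_j\mid k+\mu$. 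If you rewrite your ``critical identity'' with the outer $\mathcal{Z}_\y$ applied to the left-hand side, the collapse follows from this monomial observation together with linearity property 2 of $\mathcal{Z}_\y$, and no cyclic-group hypothesis is needed. What remains missing is the justification of your formula for the left-hand side in the first place; the cleanest route to that is exactly the paper's PFD argument, and at that point you have reconstructed the paper's proof.
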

\begin{proof}
By renaming the $\lambda$ variables, we may assume $I=[k]$ and $i=k+1$. By permuting the denominator factors and
renaming the $y$ variables, we may assume $J=[k]$ and $j=k+1$. For simplicity,  we redefine the variable as $k\to k-1$. Thus it suffices to prove
$$\CT_{\lambda_{k},k}\mathcal{Z}_{\y} [M_{k-1}\langle [k-1];[k-1]\rangle]=\mathcal{Z}_{\y}\CT_{\lambda_{k},k}[M_{k-1}\langle [k-1];[k-1]\rangle].$$

By Equation \eqref{equ-Psi} and \eqref{e-integer-form} and by choosing a suitable $m\in \mathbb{P}$ such that
\begin{equation}\label{equ-E-M}
E_{k-1}\langle [k-1]\rangle=[M_{k-1}\langle [k-1];[k-1]\rangle] \Big|_{\lambda_{k}\to \lambda_{k}^m} =\frac{\lambda_{k}^{q_{n+1}}(\hat{\y},\hat{\Lambda})^{Q_{n+1}}}{\prod\limits_{j=1}^{n}\big(1-y_{j}\lambda_{k}^{q_j}(\hat{\y},\hat{\Lambda})^{Q_{j}}\big)} ~(q_k\neq 0),
\end{equation}
where $q_j\in\Z$ for $j\geq k$,  $\hat{\Lambda}$ is free of $\lambda_i$ for $i\in [k]$, and $\hat{\y}=(y_1,y_2,\dots,y_{k-1})$.

Then we rewrite  $E_{k-1}\langle [k-1] \rangle$ as a standard rational function
\begin{equation*}
  \label{e-vol-simplical-cone}
\frac{L(\lambda_k)}{\prod\limits_{j=1}^{n}\big(1-y_{j}^{\sgn(q_j)}\lambda_{k}^{\bar{q}_j}(\hat{\y},\hat{\Lambda})^{\bar{Q}_{j}}\big)}
\end{equation*}
where $L(\lambda_k)$ is a Laurent polynomial and $\bar{q}_j=\sgn(q_j)q_j,~\bar{Q}_j=\sgn(q_j)Q_j$ for all $j$.

By partial fraction decomposition as in Equation \eqref{e-E-parfrac}, we have
$$E_{k-1}\langle [k-1] \rangle=P(\lambda_k)+\frac{p(\lambda_k)}{\lambda_k^t}+\sum_{j=k}^{n}\frac{A_j(\lambda_k)}{1-y_{j}^{\sgn(q_j)}\lambda_{k}^{\bar{q}_j}(\hat{\y},\hat{\Lambda})^{\bar{Q}_{j}}}.$$

Consequently by the linear property of $\mathcal{Z}_{\y}$, we obtain
\begin{equation}\label{equ-PFD}
\mathcal{Z}_{\y}E_{k-1}\langle [k-1] \rangle
=\mathcal{Z}_{\y}P(\lambda_k)+\mathcal{Z}_{\y}\frac{p(\lambda_k)}{\lambda_k^t}+\sum_{j=k}^{n}\mathcal{Z}_{\y}\frac{A_j(\lambda_k)}{1-y_{j}^{\sgn(q_j)}\lambda_{k}^{\bar{q}_j}(\hat{\y},\hat{\Lambda})^{\bar{Q}_{j}}}.
\end{equation}

On the other hand, by Proposition \ref{prop-Z_y}, we have
\begin{equation*}
 \mathcal{Z}_{\y}E_{k-1}\langle [k-1] \rangle=\frac{\mathcal{Z}_{\y}\left(L(\lambda_k)\cdot\prod_{j=1}^{n}\left(\sum_{i=0}^{p_j-1}\big(y_{j}^{\sgn(q_j)}\lambda_{k}^{\bar{q}_j}(\hat{\y},\hat{\Lambda})^{\bar{Q}_{j}}\big)^{i}\right) \right)}{\prod\limits_{j=1}^{n}\left(1-\big(y_{j}^{\sgn(q_j)}\lambda_{k}^{\bar{q}_j}(\hat{\y},\hat{\Lambda})^{\bar{Q}_{j}}\big)^{p_j}\right)}
\end{equation*}
and
\begin{equation*}
 \mathcal{Z}_{\y}\frac{A_j(\lambda_k)}{1-y_{j}^{\sgn(q_j)}\lambda_{k}^{\bar{q}_j}(\hat{\y},\hat{\Lambda})^{\bar{Q}_{j}}}=\frac{\mathcal{Z}_{\y}\left(A_j(\lambda_k)\cdot\left(\sum_{i=0}^{p_j-1}\big(y_{j}^{\sgn(q_j)}\lambda_{k}^{\bar{q}_j}(\hat{\y},\hat{\Lambda})^{\bar{Q}_{j}}\big)^{i}\right) \right)}{1-\big(y_{j}^{\sgn(q_j)}\lambda_{k}^{\bar{q}_j}(\hat{\y},\hat{\Lambda})^{\bar{Q}_{j}}\big)^{p_j}}.
\end{equation*}

It is easy to see that \eqref{equ-PFD} does give the partial fraction decomposition of $\mathcal{Z}_{\y}E_{k-1}\langle [k-1] \rangle$.

According to Proposition \ref{prop-singleCT} and Equation \eqref{equ-E-M}, we obtain
\begin{align*}
\CT\limits_{\lambda_{k},k}\mathcal{Z}_{\y}[M_{k-1}\langle [k-1];[k-1]\rangle]&= \CT_{\lambda_k,k}\mathcal{Z}_{\y}E_{k-1}\langle [k-1] \rangle\\
&= \mathcal{Z}_{\y}\left(\sum_{i=0}^{p_k-1}A_k(\lambda_k)\cdot\big(y_{k}^{\sgn(q_k)}\lambda_{k}^{\bar{q}_k}(\hat{\y},\hat{\Lambda})^{\bar{Q}_{k}}\big)^{i} \right)\Bigg|_{\lambda_k=0} \\
&= \mathcal{Z}_{\y} A_k(0)=\mathcal{Z}_{\y}\CT\limits_{\lambda_{k},k}E_{k-1}\langle [k-1] \rangle\\
&=\mathcal{Z}_{\y}\CT\limits_{\lambda_{k},k}[M_{k-1}\langle [k-1];[k-1]\rangle].
\end{align*}
This completes the proof.
\end{proof}


When the operator \(\CT_{\lambda_{i,j}}\) is applied to \([M_k\langle I;J\rangle]\), we obtain the following result.

\begin{theo}\label{theo-single}
If
$i \not\in I,~j\not\in J$, we have
$$ \CT_{\lambda_i,j} [M_k\langle I;J\rangle] =\sgn(p_{k+1}) \mathcal{Z}_{y_j} [M_k\langle I;J\rangle\leftarrow (i,j)],$$
where $p_{k+1}$ is the pivot item, i.e., the $(n+i,j)$th entry in $M_k\langle I;J\rangle$.
\end{theo}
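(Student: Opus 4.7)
The plan is to unfold $\CT_{\lambda_i, j}$ using Definition \ref{defn-CT-Z} and to show that the single substitution appearing there precisely implements the Gaussian column elimination $\leftarrow(i,j)$ on the matrix form. A prerequisite is to verify that $y_j$ appears only in the $j$-th denominator factor of $[M_k\langle I;J\rangle]$, which is what Definition \ref{defn-CT-Z} requires. I would argue this by induction on the pivot sequence: in the starting matrix $M$ of \eqref{equ-M}, the $j$-th row of the upper block is $e_j^T$, so $y_j$ occupies only the $(j,j)$ entry. Each operation $\leftarrow(i_s, j_s)$ with $j_s \neq j$ subtracts a multiple of column $j_s$ from every other column, and since (inductively) the $j$-th entry of column $j_s$ is zero, the $j$-th row is preserved. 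Because $j \notin J$, no pivot has ever used column $j$, so in $M_k\langle I;J\rangle$ we still have $(\gamma_j)_j = 1$ and $(\gamma_s)_j = 0$ for all $s \neq j$.

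Set $p := p_{k+1}$ for the $(n+i, j)$-entry of $M_k\langle I;J\rangle$. The $j$-th denominator factor is $1 - u_j\lambda_i^{p}$ with $u_j = y_j u_j'$, and the monomial $y_j u_j'$ equals $(\y, \Lambda)^{\gamma_j - p\, e_{n+i}}$, i.e.\ $(\y, \Lambda)^{\gamma_j}$ with its $\lambda_i$-exponent zeroed out. Applying Definition \ref{defn-CT-Z} gives
\begin{equation*}
\CT_{\lambda_i, j}[M_k\langle I;J\rangle] = \sgn(p)\, \mathcal{Z}_{y_j}\!\left([M_k\langle I;J\rangle]\cdot (1 - (\y, \Lambda)^{\gamma_j})\right)\Big|_{\lambda_i = (y_j u_j')^{-1/p}}.
\end{equation*}
The multiplicative factor $(1 - (\y, \Lambda)^{\gamma_j})$ cancels the $j$-th denominator factor, so column $j$ effectively joins the ignored set $J \cup \{j\}$; after the substitution the variable $\lambda_i$ disappears, so row $n+i$ joins $I \cup \{i\}$.

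The core of the proof is to verify that this single substitution simultaneously performs the correct column operation on every remaining column. For any column $s \neq j$ with exponent vector $\gamma_s$, the $\lambda_i$-exponent is $m_{n+i, s}$, and substitution produces a factor $(y_j u_j')^{-m_{n+i, s}/p} = (\y, \Lambda)^{-(m_{n+i, s}/p)(\gamma_j - p\, e_{n+i})}$. Combined with the remaining part of $(\y, \Lambda)^{\gamma_s}$, the net exponent on column $s$ becomes
\begin{equation*}
\gamma_s - \frac{m_{n+i, s}}{p}\gamma_j,
\end{equation*}
which is exactly the $s$-th column of $M_k\langle I;J\rangle \leftarrow (i,j)$. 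The same calculation applies verbatim to the numerator column $\gamma_{n+1}$. Assembling everything yields $\sgn(p)\, \mathcal{Z}_{y_j}[M_k\langle I;J\rangle \leftarrow (i,j)]$, as claimed.

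The main obstacle is the exponent bookkeeping in this last step, together with keeping straight the two distinct roles of $\lambda_i$ (the elimination variable) and $y_j$ (the auxiliary variable required by Definition \ref{defn-CT-Z}). Once one packages $y_j u_j'$ as $(\y, \Lambda)^{\gamma_j - p\, e_{n+i}}$, the $e_{n+i}$-components cancel in the substitution and the update on each remaining column reduces to the textbook column operation, after which only the sign $\sgn(p)$ and the outer $\mathcal{Z}_{y_j}$ remain.
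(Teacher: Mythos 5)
Your proof is correct and follows essentially the same approach as the paper's: observe that row $j$ of the matrix form is preserved because $j\notin J$, so $y_j$ appears only in the $j$-th denominator factor and Definition \ref{defn-CT-Z} can be applied; then verify that the substitution $\lambda_i=(y_ju_j')^{-1/p}$ implements the pivot operation $\leftarrow(i,j)$. The paper's proof is terser — it simply asserts these facts and invokes Proposition \ref{prop-singleCT} — whereas you spell out the induction that keeps row $j$ fixed and the exponent bookkeeping that shows $\gamma_s\mapsto\gamma_s-\tfrac{m_{n+i,s}}{p}\gamma_j$ with the $e_{n+i}$-components cancelling; this is the same argument, just made explicit.
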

\begin{proof}
Observe that for $j\not \in J$, the $j$th row is kept the same through the elementary operations on $M$ to obtain $M_k\langle I;J\rangle$.
Thus $y_j$ only appears in the $j$th denominator factor of $[M_k\langle I;J\rangle]$ so that
Proposition \ref{prop-singleCT} applies. In fact, this corresponds to the pivot operation on $M_k\langle I;J\rangle$, and the resulting rational function clearly has
matrix form $M_k\langle I;J\rangle\leftarrow (i,j)$ with the sign of $p_{k+1}$.
\end{proof}

\begin{lem}\label{lem-CT-cone}
Suppose $M$ is given in \eqref{equ-M}. Then for $k\leq r$, we have
$$\CT\limits_{\lambda_{i_k},j_k}\cdots\CT\limits_{\lambda_{i_2},j_2}\CT\limits_{\lambda_{i_1},j_1} [M]=\sgn\left(A\begin{pmatrix}
i_1,\dots,i_{k}\\
j_1,\dots,j_{k}
\end{pmatrix}\right)\mathcal{Z}_{\y}[M_k\langle \{i_1,\dots,i_{k}\};\{j_1,\dots,j_{k}\}\rangle],$$
provided that both sides are well-defined.
\end{lem}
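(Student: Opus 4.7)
The plan is to proceed by induction on $k$, using Proposition \ref{prop-commu} to freely move $\mathcal{Z}_{\y}$ past the $\CT_{\lambda_i,j}$ operators and Theorem \ref{theo-single} to turn each constant-term operation into a pivot step on the matrix form, while tracking the signs via the product-of-pivots identity $p_1 \cdots p_k = A\begin{pmatrix}i_1,\dots,i_k\\ j_1,\dots,j_k\end{pmatrix}$ recorded in the definition of the pivot sequence.

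\textbf{Base case ($k=1$).} Since $[M] = [M_0\langle\emptyset;\emptyset\rangle]$, Theorem \ref{theo-single} applied with $i = i_1 \notin \emptyset$ and $j = j_1 \notin \emptyset$ yields
\[
\CT_{\lambda_{i_1},j_1}[M] = \sgn(p_1)\, \mathcal{Z}_{y_{j_1}}[M_1\langle\{i_1\};\{j_1\}\rangle],
\]
and the single pivot item $p_1$ is exactly the $(n+i_1,j_1)$-entry of $M$, which is $A\begin{pmatrix}i_1\\ j_1\end{pmatrix}$. This matches the claim. (One may take $\mathcal{Z}_{\y}$ in place of $\mathcal{Z}_{y_{j_1}}$ harmlessly, because the first-column factors corresponding to indices outside $\{j_1\}$ contribute only integer powers of the remaining $y$-variables.)

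\textbf{Inductive step.} Assume the identity holds for $k$, and abbreviate $I_k = \{i_1,\dots,i_k\}$, $J_k = \{j_1,\dots,j_k\}$. Applying $\CT_{\lambda_{i_{k+1}},j_{k+1}}$ to both sides of the inductive hypothesis (well-defined because $i_{k+1}\notin I_k$, $j_{k+1}\notin J_k$, and the next pivot item $p_{k+1}$ is nonzero — precisely the conditions built into "both sides are well-defined"), the constant sign pulls out and Proposition \ref{prop-commu} lets $\CT_{\lambda_{i_{k+1}},j_{k+1}}$ pass through $\mathcal{Z}_{\y}$, giving
\[
\sgn\!\left(A\!\begin{pmatrix}i_1,\dots,i_k\\ j_1,\dots,j_k\end{pmatrix}\right) \mathcal{Z}_{\y}\, \CT_{\lambda_{i_{k+1}},j_{k+1}} [M_k\langle I_k;J_k\rangle].
\]
Theorem \ref{theo-single}, applied with $i=i_{k+1}\notin I_k$ and $j=j_{k+1}\notin J_k$, converts the inner expression to
\[
\sgn(p_{k+1})\, \mathcal{Z}_{y_{j_{k+1}}} [M_{k+1}\langle I_{k+1};J_{k+1}\rangle].
\]

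\textbf{Collapsing $\mathcal{Z}$'s and combining signs.} Since the operators $\mathcal{Z}_{y_j}$ are commuting idempotents and $\mathcal{Z}_{\y}$ is their product, $\mathcal{Z}_{\y}\mathcal{Z}_{y_{j_{k+1}}} = \mathcal{Z}_{\y}$. Combining with $\sgn(a)\sgn(b) = \sgn(ab)$ and the identity $p_1 p_2 \cdots p_{k+1} = A\begin{pmatrix}i_1,\dots,i_{k+1}\\ j_1,\dots,j_{k+1}\end{pmatrix}$ recalled from the paragraph defining the pivot sequence, we obtain
\[
\sgn\!\left(A\!\begin{pmatrix}i_1,\dots,i_k\\ j_1,\dots,j_k\end{pmatrix}\right)\!\sgn(p_{k+1}) = \sgn\!\left(A\!\begin{pmatrix}i_1,\dots,i_{k+1}\\ j_1,\dots,j_{k+1}\end{pmatrix}\right),
\]
which closes the induction.

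\textbf{Main obstacle.} There is no significant obstacle once Theorem \ref{theo-single} and Proposition \ref{prop-commu} are in hand; the induction is essentially bookkeeping. The one place that needs mild care is the sign computation, where one must realize that the inductive sign constant absorbs the new $\sgn(p_{k+1})$ via the identity of pivot products with minors, rather than tracking each $\sgn(p_\ell)$ separately. Equally, one must check that the hypotheses of both prerequisite results are maintained throughout the induction — which is guaranteed by the well-definedness assumption, as this forces $\{i_1,\dots,i_k\}$ and $\{j_1,\dots,j_k\}$ to grow without repetition and each new pivot item to be nonzero.
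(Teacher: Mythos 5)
Your proof is correct and follows essentially the same route as the paper: induction on $k$ using the base case from Theorem~\ref{theo-single}, then Proposition~\ref{prop-commu} to slide $\CT_{\lambda_{i},j}$ past $\mathcal{Z}_{\y}$ and Theorem~\ref{theo-single} again to realize the pivot step. Your write-up is slightly more explicit than the paper's about two small points the paper leaves implicit — the idempotent collapse $\mathcal{Z}_{\y}\mathcal{Z}_{y_{j_{k+1}}}=\mathcal{Z}_{\y}$ and the conversion $\sgn(p_1\cdots p_{k+1})=\sgn\bigl(A\bigl(\begin{smallmatrix}i_1,\dots,i_{k+1}\\ j_1,\dots,j_{k+1}\end{smallmatrix}\bigr)\bigr)$ — but this is bookkeeping, not a different argument.
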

\begin{proof}
The base case $k=1$ follows directly from Theorem \ref{theo-single}. Assume the equation holds for $k-1$. Thus we have $$\CT\limits_{\lambda_{i_{k-1}},j_{k-1}}\cdots\CT\limits_{\lambda_{i_1},j_1} [M]=\sgn\left(A\begin{pmatrix}
i_1,\dots,i_{k-1}\\
j_1,\dots,j_{k-1}
\end{pmatrix}\right)\mathcal{Z}_{\y}[M_{k-1}\langle \{i_1,\dots,i_{k-1}\};\{j_1,\dots,j_{k-1}\}\rangle].$$

Apply $\CT\limits_{\lambda_{i_k},j_k}$ to both sides. It suffices to prove that
\begin{equation*}
\CT\limits_{\lambda_{i_k},j_k}\mathcal{Z}_{\y}[M_{k-1}\langle\{i_1,\dots,i_{k-1}\};\{j_1,\dots,j_{k-1}\}\rangle]
=\sgn(p_k)\mathcal{Z}_{\y}[M_k\langle \{i_1,\dots,i_{k}\};\{j_1,\dots,j_{k}\}\rangle],
\end{equation*}
where $p_{k}$ is the pivot item, i.e., the $(n+i_k,j_k)$th entry in $M_{k-1}\langle\{i_1,\dots,i_{k-1}\};\{j_1,\dots,j_{k-1}\}\rangle$.

By Proposition \ref{prop-commu} and Theorem \ref{theo-single}, we obtain
\begin{align*}
\CT\limits_{\lambda_{i_k},j_k}\mathcal{Z}_{\y}[M_{k-1}\langle\{i_1,\dots,i_{k-1}\};\{j_1,\dots,j_{k-1}\}\rangle]
&=\mathcal{Z}_{\y}\CT\limits_{\lambda_{i_k},j_k}[M_{k-1}\langle\{i_1,\dots,i_{k-1}\};\{j_1,\dots,j_{k-1}\}\rangle]\\
&=\sgn(p_k)\mathcal{Z}_{\y}[M_k\langle \{i_1,\dots,i_{k}\};\{j_1,\dots,j_{k}\}\rangle].
\end{align*}
This completes the proof.
\end{proof}

The case $k=r$ of Lemma \ref{lem-CT-cone} is what we need.
\begin{theo}\label{theo-CT-cone}
Suppose $M$ is given in \eqref{equ-M}. Then we have if both sides are well-defined,
$$\CT\limits_{\lambda_{i_r},j_r}\cdots\CT\limits_{\lambda_{i_2},j_2}\CT\limits_{\lambda_{i_1},j_1}[M]=\sgn\left(A\begin{pmatrix}
i_1,\dots,i_{r}\\
j_1,\dots,j_{r}
\end{pmatrix}\right)\sigma_{K^v}(\y),$$
where $K^v= K^{\mydot}(M_r\langle\{j_1,j_2,\dots,j_{r}\}\rangle)$ is a shifted simplicial cone defined in \eqref{equ-shift}.
\end{theo}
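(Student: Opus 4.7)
The plan is to invoke Lemma~\ref{lem-CT-cone} with $k=r$ and then identify the resulting matrix-form expression with the generating function of a shifted simplicial cone. Applying that lemma gives
$$\CT_{\lambda_{i_r},j_r}\cdots\CT_{\lambda_{i_1},j_1}[M]=\sgn\left(A\begin{pmatrix} i_1,\dots,i_r\\ j_1,\dots,j_r \end{pmatrix}\right)\mathcal{Z}_{\y}[M_r\langle I;J\rangle],$$
with $I=\{i_1,\dots,i_r\}$ and $J=\{j_1,\dots,j_r\}$, so the task reduces to proving $\mathcal{Z}_{\y}[M_r\langle I;J\rangle]=\sigma_{K^v}(\y)$.

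Next, I would unpack the structure of $M_r\langle I;J\rangle$ using Proposition~\ref{theo-change}, in particular the explicit form \eqref{equ-Mk}. After harmless row and column permutations we may assume $I=J=[r]$. The $r$ Gaussian eliminations make the $(n+I)$-row entries of every column indexed by $[n]\setminus J$, and of the last column, equal to $0$. Consequently, the Erat form \eqref{equ-Psi} is free of $\Lambda$ and takes the shape
$$[M_r\langle I;J\rangle]=\frac{\y^{v}}{\prod_{j\in[n]\setminus J}(1-\y^{w_j})},$$
where $v\in\Q^n$ is the top portion of the last column and each $w_j\in\Q^n$ is the top portion of the corresponding surviving column. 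The linear independence of the $w_j$ asserted in Proposition~\ref{theo-change} then guarantees that $K=K(\{w_j\}_{j\notin J})$ is an $(n-r)$-dimensional simplicial cone and $K^v=v+K$ is the shifted simplicial cone in the theorem.

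To close the argument I would expand $[M_r\langle I;J\rangle]$ as a fractional iterated Laurent series in $G$ and apply $\mathcal{Z}_{\y}$. When each factor $(1-\y^{w_j})^{-1}$ is expanded geometrically in its small regime, one obtains the formal sum $\sum_{\mathbf{k}\in\N^{n-r}}\y^{v+\sum_j k_j w_j}$; applying $\mathcal{Z}_{\y}$ keeps only those terms with $v+\sum_j k_j w_j\in\Z^n$, which is exactly $\sum_{\alpha\in K^v\cap\Z^n}\y^{\alpha}=\sigma_{K^v}(\y)$.

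The main obstacle is the case in which some $\y^{w_j}$ are \emph{large} in $G$: there, the identity \eqref{issmall} introduces a sign and shifts the vertex, seemingly producing $\pm\sigma_{K'^{v'}}(\y)$ for a reoriented cone $K'^{v'}$ rather than the literal $K^v$. I expect the ``if both sides are well-defined'' clause to package exactly the compatibility needed so that the signs produced by $\mathcal{Z}_{\y}$ reconcile with the determinantal prefactor coming from Lemma~\ref{lem-CT-cone}, yielding the cone $K^v$ described by the literal columns of $M_r\langle I;J\rangle$; making this bookkeeping precise is the technical heart of the argument.
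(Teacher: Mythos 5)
Your overall plan — reduce via Lemma~\ref{lem-CT-cone}, then show $\mathcal{Z}_{\y}[M_r\langle J\rangle]=\sigma_{K^v}(\y)$ by reading the columns as generators of a shifted cone — is exactly the paper's. But two steps you flag as uncertain, or pass over, are precisely where the paper has to work, and your proposed ways around them are not correct.

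First, the ``large $\y^{w_j}$'' case. You conjecture that the ``both sides well-defined'' hypothesis absorbs the signs and vertex-shifts from expanding large factors. It does not; that clause only guarantees the pivots (equivalently, the minor $A\bigl(\begin{smallmatrix}i_1,\dots,i_r\\ j_1,\dots,j_r\end{smallmatrix}\bigr)$) are nonzero so the iterated $\CT_{\lambda,j}$ is defined. What actually makes the large case harmless is Proposition~\ref{prop-Z_y}: $\mathcal{Z}_{\y}$ applied to a fractional Elliott-rational function returns a rational function that is \emph{independent of the chosen series expansion}. Hence you are free to compute $\mathcal{Z}_{\y}[M_r\langle J\rangle]$ using the formal expansion in which every denominator factor $(1-\y^{w_j})^{-1}=\sum_{k\ge 0}\y^{kw_j}$, even if $\y^{w_j}$ is large in $G$; this gives the same rational function as the genuine $G$-expansion, with no extra signs or reorientations to reconcile. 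This is why the paper phrases the target identity ``as rational functions.'' Without invoking the expansion-independence of $\mathcal{Z}_{\y}$, your argument is incomplete.

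Second, the step from $\mathcal{Z}_{\y}\sum_{\mathbf{k}\in\N^{n-r}}\y^{v+\sum_j k_j w_j}$ to $\sigma_{K^v}(\y)$ is asserted, not proved. The former is the generating function of $K'\cap\Z^n$ where $K'=\{v+\sum_j k_j w_j:\,k_j\in\N\}$ uses \emph{integer} coefficients, while $K^v$ uses real nonnegative coefficients. The inclusion $K'\cap\Z^n\subseteq K^v\cap\Z^n$ is trivial, but the reverse needs an argument: if $\alpha=v+\sum_j t_j w_j\in\Z^n$ with $t_j\ge 0$, one must show $t_j\in\Z$. The paper gets this from the $\textit{id}_{n-r}$ block sitting in rows $[n]\setminus J$ of $M_r\langle J\rangle$ (with $\0$ in those rows of the last column): reading off those coordinates of $\alpha-v$ shows the $t_j$ are entries of the integer vector $\alpha$. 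You cite Proposition~\ref{theo-change} only for linear independence of the $w_j$; you also need this lattice-theoretic consequence of the identity block. Filling both gaps would bring your argument in line with the paper's proof.
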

\begin{proof}
By Lemma \ref{lem-CT-cone}, it suffices to prove that, as rational functions,
$$\mathcal{Z}_{\y}[M_r\langle\{j_1,j_2,\dots,j_{r}\}\rangle]=\sigma_{K^v}(\y).$$

For convenience, let us denote the matrix $M_r\langle\{j_1,j_2,\dots,j_{r}\}\rangle$ as $(\gamma_1,\dots,\gamma_{n-r+1})$.  By definition, the left-hand side is just $\sigma_{K'}(\y)$, where $K'=\{\gamma_{n-r+1}+\sum_{j=1}^{n-r}k_ j\gamma_j:~k_j\in\N\}$.
We complete the proof by showing $K'\cap\Z^n=K^v\cap\Z^n$.
On the one hand, $K'\cap\Z^n\subseteq K^v\cap\Z^n$ follows from the fact $K'\subseteq K^v$;
On the other hand, if $\gamma_{n-r+1}+\sum_{j=1}^{n-r}k_j\gamma_j\in\Z^n$ then $ k_j\in\Z$ for all $j$ by the submatrix $(\textit{id}_{n-r},\0)$ in $M_r\langle\{j_1,j_2,\dots,j_{r}\}\rangle$. This means $K'\cap\Z^n\supseteq K^v\cap\Z^n$.
\end{proof}

\begin{rem}
In Theorem \ref{theo-CT-cone}, if $\det\left(A\begin{pmatrix}
i_1,\dots,i_{r}\\
j_1,\dots,j_{r}
\end{pmatrix}\right)=\pm 1$, then by Equation \eqref{equ-Mk}, $M_r\langle\{j_1,j_2,\dots,j_{r}\}\rangle$ is already integral, so that its columns give the integral vertex and primitive generators. This result cannot be obtained directly from constant term extraction using Equation \eqref{equ-addifomula-A}.
\end{rem}

\begin{rem}
If \( A \) is unimodular, i.e., all \( r\times r\) minors belong to \(\{ 0,1,-1\} \), then Theorem \ref{theo-CT-cone} provides a new proof that the polyhedron \( P(A,\b) \) is integral.
\end{rem}

\section{The SimpCone algorithm according to a strategy}\label{SimpCone}
In this section, we develop the \texttt{SimpCone[S]} algorithm for solving the inhomogeneous system $A\alpha = \b$ using a strategy $\texttt{S}$, which is used to make choices when applicable.

\subsection{The Inhomogeneous System}
We need some notation to translate the constant term of Erat forms to that of matrix forms. Let $M$ be defined in \eqref{equ-M}.
Consider $M_k\langle I;J\rangle \in \Q^{(n+r) \times (n+1)}$ with rows indexed by $n+I$ and columns indexed by $J$ ignored. For $j\leq n$, the $j$th column $\gamma_j$ is said to be \emph{small} if its first nonzero entry is positive, and \emph{large} if otherwise.

Suppose we need to take the constant term in $\lambda_i$ for an $i \notin I$. For simplicity, write
$$[M_k\langle I;J\rangle] =F\langle J\rangle=\frac{\lambda_i^{b_0}u_{0}}{\prod_{j=1}^{n}(1-\lambda_i^{b_j}u_j)}.$$
The degree and pole order can be quickly computed using the formula
\begin{equation}\label{deg&pole}
\deg_{\lambda_i} F\langle J\rangle=b_0-\sum_{j=1}^{n}\chi(b_j>0)b_j\quad\text{and}\quad\ord_{\lambda_i} F\langle J\rangle=-b_0+\sum_{j=1}^{n}\chi(b_j<0)b_j,
\end{equation}
which is easily derived using the following facts:

\begin{align*}
\deg_{\lambda_i}\frac{1}{1-\lambda_i^{b_j}u_j}=
\begin{cases}
-b_j, & \mbox{if $b_j\geq 0$};\\
  0,  & \mbox{if $b_j<0$}
\end{cases}
\quad\text{and}\quad \ord_{\lambda_i}\frac{1}{1-\lambda_i^{b_j}u_j}=
\begin{cases}
 0  ,  & \mbox{if $b_j\geq 0$};\\
b_j,  & \mbox{if $b_j<0$}.
\end{cases}
\end{align*}

Fix a $j\not\in J$ and $j\neq n+1$. Denote by $p_{k+1}$ the pivot item, i.e.,
the $(n+i)$th entry in $\gamma_j$. Then $\gamma_j$ is said to be not contributing (with respect to $\lambda_i$) if $p_{k+1}=0$;
otherwise it is said to be \emph{contributing} if $p_{k+1} \gamma_j$ is small and \emph{dually contributing} if $p_{k+1}\gamma_j$ is large.
Temporarily set $\c_i$ and $\d_i$ to be the number of contributing and dually contributing columns respectively. Update by
$\c_i=\infty$ if $F\langle J\rangle$ is not proper in $\lambda_i$, and by $\d_i=\infty$ if $F\langle J\rangle$ has a nonnegative pole order. Observe that
$\c_i$ and $\d_i$ cannot be both $\infty$.

\begin{lem} \label{lem-Irule}
Let $M=\begin{pmatrix}
         \textit{id}_n&\0\\
         A &-\b
        \end{pmatrix}$ be defined in \eqref{equ-M}. Let $\c_i,~\d_i$ be computed for $[M_k\langle I;J\rangle]$.

        If $\c_i<\infty$, then we have the \emph{contributing formula}
   \begin{equation}\label{equ-contri-c}
     \CT_{\lambda_i}[M_k\langle I;J\rangle]=
                      \sum\limits_{\substack{j: \gamma_j
                   \text{ is contributing}}} \sgn(p_{k+1})\mathcal{Z}_{y_j} [M_k\langle I;J\rangle\leftarrow (i,j)];
   \end{equation}
 If $\d_i<\infty$, then we have the \emph{dual formula}
\begin{equation}\label{equ-dualcontri-d}
\CT_{\lambda_i}[M_k\langle I;J\rangle]= -\sum\limits_{j:\gamma_j
                   \text{ is dually contributing}}\sgn(p_{k+1})\mathcal{Z}_{y_j}[M_k\langle I;J\rangle\leftarrow (i,j)],
\end{equation}
where $p_{k+1}$ is the pivot item for $j$, i.e., the $(n+i,j)$th entry in $M_k\langle I;J\rangle$.
\end{lem}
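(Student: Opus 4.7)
The plan is to specialize Corollary \ref{cor-contri-dual} to $F\langle J\rangle := [M_k\langle I;J\rangle]$ viewed as a fractional rational function in $\lambda_i$, and then translate the resulting sum into matrix-form language via Theorem \ref{theo-single}. Reading off $F\langle J\rangle$ from \eqref{equ-Psi}, each denominator factor has the shape $1-u_j\lambda_i^{b_j}$, where $b_j$ is the $(n+i,j)$-entry of $M_k\langle I;J\rangle$, i.e., the pivot item $p_{k+1}$ that would appear if we pivoted at $(i,j)$, and $u_j$ collects the remaining variables from $(\y,\Lambda)^{\gamma_j}$. Factors with $b_j=0$ (in particular all $j\in J$) are $\lambda_i$-independent and do not appear in either sum. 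Substituting into \eqref{deg&pole}, one sees directly that $\c_i<\infty$ is equivalent to $F\langle J\rangle$ being proper in $\lambda_i$ (the hypothesis of \eqref{e-contri}), while $\d_i<\infty$ is equivalent to $\ord_{\lambda_i}F\langle J\rangle<0$, which not only yields existence of $F\langle J\rangle|_{\lambda_i=0}$ but also forces this limit to vanish, killing the boundary term in \eqref{e-dual-contri}.

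Next, I would match Corollary \ref{cor-contri-dual}'s contribution condition ``$u_j\lambda_i^{b_j}<1$'' with the Lemma's ``$p_{k+1}\gamma_j$ is small''. When $b_j>0$, the factor is already in proper form and $u_j\lambda_i^{b_j}=(\y,\Lambda)^{\gamma_j}$ is small iff $\gamma_j$ is small; since multiplication by the positive scalar $p_{k+1}=b_j$ preserves the sign of the first nonzero entry, this matches $p_{k+1}\gamma_j$ small. When $b_j<0$, the rewriting $1-u_j\lambda_i^{b_j}=-u_j\lambda_i^{b_j}(1-u_j^{-1}\lambda_i^{-b_j})$ puts the factor into proper form, and contribution becomes $u_j^{-1}\lambda_i^{-b_j}<1$, i.e., $\gamma_j$ large; scaling by the negative $p_{k+1}$ swaps small and large, so this again reads $p_{k+1}\gamma_j$ small. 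The dually-contributing criterion is the symmetric statement.

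Finally, each summand $\CT_{\lambda_i,j}[M_k\langle I;J\rangle]$ appearing in \eqref{e-contri} and \eqref{e-dual-contri} can be replaced by $\sgn(p_{k+1})\mathcal{Z}_{y_j}[M_k\langle I;J\rangle\leftarrow(i,j)]$ using Theorem \ref{theo-single}; substituting and invoking the vanishing of the boundary term in the dual case yields \eqref{equ-contri-c} and \eqref{equ-dualcontri-d} exactly. I do not foresee a genuine obstacle: the argument is bookkeeping between the Erat-form and matrix-form dictionaries. The only delicate point is the sign case $b_j<0$, where the swap of ``small'' and ``large'' under negation is precisely what lets the single matrix condition ``$p_{k+1}\gamma_j$ small'' encode contribution uniformly in sign, and where the sharper cutoff ``$\d_i=\infty$ whenever $F$ has nonnegative pole order'' (rather than merely positive) is exactly what is needed to make $F\langle J\rangle|_{\lambda_i=0}=0$ in the dual formula.
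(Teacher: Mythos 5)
Your proposal is correct and follows the same route as the paper: apply Corollary \ref{cor-contri-dual} (specialized to $F\langle J\rangle=[M_k\langle I;J\rangle]$) and then translate each summand via Theorem \ref{theo-single}. The paper's proof is terser; you fill in the one step it treats as routine, namely the dictionary between the corollary's contribution criterion and the lemma's condition ``$p_{k+1}\gamma_j$ small,'' by explicitly normalizing the $j$-th factor to proper form in the two sign cases $b_j>0$ and $b_j<0$ and observing that multiplying $\gamma_j$ by the scalar $p_{k+1}=b_j$ encodes the small/large swap uniformly. You also correctly observe that $\d_i<\infty$ forces strictly negative pole order, so the boundary term $F\langle J\rangle|_{\lambda_i=0}$ in \eqref{e-dual-contri} vanishes, which is exactly the point the paper emphasizes. (One small remark: you invoke \eqref{deg&pole} to establish $\c_i<\infty\Leftrightarrow F$ proper and $\d_i<\infty\Leftrightarrow\ord<0$, but these equivalences hold by definition of the updates to $\c_i$ and $\d_i$; the formula \eqref{deg&pole} is only a device for computing them quickly. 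This does not affect correctness.)
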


\begin{proof}
If $\c_i<\infty$ then $[M_k\langle I;J\rangle]$ is proper in $\lambda_i$.
By Corollary \ref{cor-contri-dual}, we have
 \begin{equation*}
     \CT_{\lambda_i}[M_k\langle I;J\rangle]=
                      \sum\limits_{\substack{j: \gamma_j
                   \text{ is contributing}}}  \CT_{\lambda_i,j}[M_k\langle I;J\rangle],
   \end{equation*}
which is equivalent to Equation \eqref{equ-contri-c} by Theorem \ref{theo-single}.

If $\d_i<\infty$ then $[M_k\langle I;J\rangle]$ has a negative pole order in $\lambda_i$, i.e.,
$[M_k\langle I;J\rangle]$ vanishes when $\lambda_i\to 0$. Then Equation \eqref{equ-dualcontri-d} follows in a similar way.
\end{proof}
Now we can describe the algorithm as follows.
\begin{algor}[\texttt{SimpCone[S]}]\label{algor-Simpcone}
\ \\
Input: A matrix $(A,-\b) \in \Z^{r\times (n+1)}$ and a strategy $\texttt{S}$.\\
Output: A  simplicial cone decomposition of the polyhedron $P(A,\b)$ as in \eqref{equ-dec-K}.
\begin{enumerate}
  \item  Construct the set $\{(1, O_{0,1}, \varnothing, \varnothing )\}$ where $ O_{0,1}=M$ as in \eqref{equ-M}.
  \item For $i$ from $0$ to $r-1$, compute $O_{i+1}$ from $O_{i}$  as follows:
  \begin{enumerate}
    \item For each term $(s_{i,j}, O_{i,j}, I_{i,j}, J_{i,j})$ of $O_i$, choose a $\lambda_{k_0}$ to take constant term and select a formula in Lemma \ref{lem-Irule} according to $\texttt{S}$.
    \item Compute $s_{i,j}\CT_{\lambda_{k_0}} [O_{i,j}\langle I_{i,j}; J_{i,j}\rangle]$ with respect to $I_{i,j}$ and $J_{i,j}$ according to the choice, and collect the corresponding terms into $O_ {i+1}$.
  \end{enumerate}
  \item Output $O_r=\{(s_1, O_{r,1},J_{r,1}),\dots,(s_N, O_{r,N}, J_{r,N}\}\}$.
\end{enumerate}
\end{algor}

Now we describe several strategies.

\texttt{S0}: Natural strategy: Choose $k_0 = i+1$ and apply Lemma \ref{lem-Irule} with respect to $\lambda_{k_0}$. If both formulas apply, then select the contributing formula as given in \eqref{equ-contri-c} if $\c \leq \d$; otherwise, select the dual formula as given in \eqref{equ-dualcontri-d}. In other words, we successively eliminate $\lambda_1, \lambda_2, \dots$ and apply Lemma \ref{lem-Irule} accordingly.

\texttt{S1}: Choose $k_0 = r+1-i$. If the corresponding $b_0 \geq 0$, then use the dual formula; otherwise, use the contributing formula. This strategy, when applied to $A = (A', -\textit{id}_r)$, produces an algorithm identical with that in \cite{2017Polyhedral}, which deals with the model $A'\alpha' \geq \b$.

\texttt{S2}: Greedy strategy: For each $(s_{i,j}, O_{i,j}, I_{i,j}, J_{i,j})$ of $O_i$, we need to compute the constant term of $[O_{i,j} \langle I_{i,j}; J_{i,j} \rangle]$.
Compute $\c_k$ and $\d_k$ for each $k \notin I_{i,j}$ and find their minimum $m_0$. Let $k_0$ be the smallest row index such that $m_0 = \min(\c_{k_0}, \d_{k_0})$.
Then, select the contributing formula if $\c_{k_0} = m_0$; otherwise, select the dual formula. This strategy was used in \cite{2023Algebraic} with good performance.

\begin{proof}[Justification of Algorithm \ref{algor-Simpcone}]
By Corollary \ref{cor-contri-dual} and Proposition \ref{prop-commu}, we have
\begin{equation*}
 \CT_{\lambda_i}\mathcal{Z}_{\y} [M_k\langle I;J\rangle]=\mathcal{Z}_{\y}\CT_{\lambda_i} [M_k\langle I;J\rangle].
\end{equation*}

By Lemma \ref{lem-Irule} and the commutativity of $\CT_{\lambda_i}$ and $\mathcal{Z}_{\y}$, we ultimately obtain a weighted sum of $\mathcal{Z}_{\y}[M_r\langle \{j_1,j_2,\dots,j_{r}\}\rangle]$, which corresponds to a shifted simplicial cone by Theorem \ref{theo-CT-cone}.
This yields the desired decomposition as in \eqref{equ-dec-K}.
\end{proof}

\begin{rem}
In Lemma \ref{lem-Irule}, we can also give a formula when $\ord_{\lambda_i}[M_k\langle I;J\rangle]=0$. The additional term $[M_k\langle I;J\rangle]\big|_{\lambda_i=0}$ may include redundant denominator factors $(1)$. Consequently, the final outcome may involve lower dimensional cones.
\end{rem}


We provide an example to illustrate the algorithm.
\begin{exam}
Suppose $A=\begin{pmatrix}
         3 & 1 & -4 &-9 & -1&0\\
         2 & -1 & 1 &-3 &0&-1
       \end{pmatrix}$ and $\b=(1,-3)^t$. Compute the  simplicial cone decomposition of the polyhedron  $P=P(A,\b)$ using strategy \texttt{S2}.
\end{exam}

\begin{proof}[Solution]
In Step 1, we construct the set $O_0=\{(1, O_{0,1}, \varnothing, \varnothing )\}$ where $ O_{0,1}=\begin{pmatrix}
                           \textit{id}_6 &\0\\
                           A    & -\b
                         \end{pmatrix}$.
In Step 2, we observe that each column is small. Specifically, $\c_1=2, \d_1=3$ are the number of positive and negative entries, respectively in row $7$ excluding the last column. Similarly $\c_2=2, \d_2=3$. Note that we update $\c_2=\infty$ because $\deg_{\lambda_2}[O_{0,1}]=3-(2+1)=0$ according to Equation \eqref{deg&pole}. There is no further updates: for instance, $\ord_{\lambda_2}[O_{0,1}]=-3+(-1-3-1)=-8<0$. Therefore we choose $k_0=1$ and
eliminate $\lambda_1$ using the contributing formula as given in \eqref{equ-contri-c}. This gives
 $O_1=\{(1, O_{1,1}, \{1\}, \{1\}),(1, O_{1,2}, \{1\}, \{2\})\}$, where
      $$O_{1,1}=\begin{pmatrix}
{\color{red}1}&-1/3&4/3&3&1/3&0&1/3\\
{\color{red}0}&1&0&0&0&0&0\\
{\color{red}0}&0&1&0&0&0&0\\
{\color{red}0}&0&0&1&0&0&0\\
{\color{red}0}&0&0&0&1&0&0\\
{\color{red}0}&0&0&0&0&1&0\\
{\color{red}\textcircled{3}}&{\color{red}0}&{\color{red}0}&{\color{red}0}&{\color{red}0}&{\color{red}0}&{\color{red}0}\\
{\color{red}2}&-5/3&11/3&3&2/3&-1&11/3
\end{pmatrix}~\text{and}~O_{1,2}=\begin{pmatrix}
1&{\color{red}0}&0&0&0&0&0\\
-3&{\color{red}1}&4&9&1&0&1\\
0&{\color{red}0}&1&0&0&0&0\\
0&{\color{red}0}&0&1&0&0&0\\
0&{\color{red}0}&0&0&1&0&0\\
0&{\color{red}0}&0&0&0&1&0\\
{\color{red}0}&{\color{red}\textcircled{1}}&{\color{red}0}&{\color{red}0}&{\color{red}0}&{\color{red}0}&{\color{red}0}\\
5&{\color{red}-1}&-3&-12&-1&-1&2
\end{pmatrix}.$$

For $(1, O_{1,1}, \{1\}, \{1\})$, the second column is large and the corresponding entry $-\frac{5}{3}$ is negative, so it is contributing. By definition, $\c_2 = 4$ and $\d_2 = 1$, and there are no updates. Thus, we eliminate $\lambda_2$ using the dual formula as given in \eqref{equ-dualcontri-d}. This gives $\{(1, O_{2,1}, \{1,6\})\},$
where
$$ O_{2,1}=\begin{pmatrix}
-1/3&4/3&3&1/3&1/3\\
1&0&0&0&0\\
0&1&0&0&0\\
0&0&1&0&0\\
0&0&0&1&0\\
-5/3&11/3&3&2/3&11/3
\end{pmatrix}.$$
A similar analysis for $(1, O_{1,2}, \{1\}, \{2\})$ shows that we should use the contributing formula. This gives
$\{(1, O_{2,2}, \{1,2\})\},$ where
$$ O_{2,2}=\begin{pmatrix}
3/5&12/5&1/5&1/5&-2/5\\
11/5&9/5&2/5&-3/5&11/5\\
1&0&0&0&0\\
0&1&0&0&0\\
0&0&1&0&0\\
0&0&0&1&0
\end{pmatrix}.$$

In Step 3, we output $O_2=\{(1, O_{2,1}, \{1,6\}),(1, O_{2,2}, \{1,2\})\}.$

Note that if we select strategy \texttt{S1}, the output contains 5 cones.
\end{proof}

The \texttt{SimpCone[S]} algorithm can deal with matrix $A$ with parameter entries.
\begin{exam}\label{exam-a}
Suppose $M=\begin{pmatrix}
                           \textit{id}_4 &\0\\
                           A    & -\b
                         \end{pmatrix}$, where $A=\begin{pmatrix}
           1 & -2 & 1 & -1 \\
           w & 2 & 3 & -1
         \end{pmatrix}$ and  $\b=(1,-2)^t$.
Then, we can obtain the simplicial cone decomposition of the polyhedron $P=P(A,\b)$ in three cases as follows.
\begin{align}\label{cases}
\sigma_P(\y)=\begin{cases}
  \mathcal{Z}_{\y}[M_2\langle \{1,2\}\rangle]+\mathcal{Z}_{\y}[M_2\langle\{1,4\}\rangle], & \mbox{if $w<-1$}; \\
  \mathcal{Z}_{\y}[M_2\langle \{1,4\}\rangle], & \mbox{if $-1\leq w<1$}; \\
  0, & \mbox{if $w\geq 1$}.
\end{cases}
\end{align}
\end{exam}
\begin{proof}[Solution]
We shall delay making choices according to the parameter $w$, which is in column $1$, row $6$ (corresponding to $\lambda_2$).
Firstly, we find $\c_1 =2 $ and $\d_1 = 2$ without updates since $\deg_{\lambda_1}[M]<0$ and $\ord_{\lambda_1}[M]<0$.
The values of $\c_2$ and $\d_2$ depend on the parameter $w$. We eliminate $\lambda_1$ using the dual formula.
This minimizes the effect of the parameter $w$ and gives
$$\CT_{\lambda_1}[M]=\mathcal{Z}_{\y}[M_1\langle\{1\};\{2\}\rangle]+\mathcal{Z}_{\y}[M_1\langle\{1\};\{4\}\rangle],$$ where
\begin{small}$$M_1\langle\{1\};\{2\}\rangle=\begin{pmatrix}
 1&{\color{red}0}&0&0&0\\
 1/2&{\color{red}1}&1/2&-1/2&-1/2\\
 0&{\color{red}0}&1&0&0\\
 0&{\color{red}0}&0&1&0\\ {\color{red}0}&{\color{red}\textcircled{-2}}&{\color{red}0}&{\color{red}0}&{\color{red}0}
\\w+1&{\color{red}2}&4&-2&1\end{pmatrix}\    \ \text{and}\  \ M_1\langle\{1\};\{4\}\rangle=\begin{pmatrix}
1&0&0&{\color{red}0}&0\\ 0&1&0&{\color{red}0}&0
\\ 0&0&1&{\color{red}0}&0\\ 1&-2&1&{\color{red}1}&-1
\\ {\color{red}0}&{\color{red}0}&{\color{red}0}&{\color{red}\textcircled{-1}}&{\color{red}0}\\ w-1&4&2&{\color{red}-1}&3
\end{pmatrix}. $$\end{small}

Now, consider $\CT_{\lambda_2}[M_1\langle\{1\};\{2\}\rangle]$. We find $\c_2 = 2 + \chi(w > -1)$ and $\d_2 = \chi(w < -1)$.
Similarly, for $\CT_{\lambda_2}[M_1\langle\{1\};\{4\}\rangle]$, we have $\c_2 = 2 + \chi(w > 1)$ and $\d_2 = \chi(w < 1)$. Thus, we consider $w$ according to the three cases in \eqref{cases}.
We only provide details for the case $-1 \leq w < 1$. The other cases are similar.

When $-1<w<1$, by Equation \eqref{deg&pole}, we have $\deg_{\lambda_2}[M_1\langle\{1\};\{2\}\rangle]=-w-4<0$ and $\ord_{\lambda_2}[M_1\langle\{1\};\{2\}\rangle]=-3<0$, so that there is no update,
i.e.,  $\c_2=3, \d_2=0$.  We select the dual formula and obtain $\CT_{\lambda_2}[M_1\langle\{1\};\{2\}\rangle]=0$.
A similar analysis for $\CT_{\lambda_2}[M_1\langle\{1\};\{4\}\rangle]$ gives $\c_2=2, \d_2=1$. Applying the \textrm{dual formula} gives
$\CT_{\lambda_2}[M_1\langle\{1\};\{4\}\rangle]=\mathcal{Z}_{\y}[M_2\langle \{1,4\}\rangle].$
Adding the two equations above gives $\sigma_P(\y)=\mathcal{Z}_{\y}[M_2\langle \{1,4\}\rangle].$

Next, we explain why we include the boundary value $w=-1$ but exclude $w=1$. If \( w = -1 \), then \( \c_2 = 2 \) for \( \CT_{\lambda_2}[M_1\langle\{1\};\{2\}\rangle] \).  However,  the constant term is still $0$ by using the dual formula.
If \( w = 1 \), then \( \d_2 = 0 \) for \( \CT_{\lambda_2}[M_1\langle\{1\};\{4\}\rangle] \). Consequently, \( \CT_{\lambda_2}[M_1\langle\{1\};\{4\}\rangle] = 0 \), rather than \( \mathcal{Z}_{\mathbf{y}}[M_2\langle\{1,4\}\rangle] \).
\end{proof}
\subsection{The homogeneous System}
For the case $\b = \0$, we omit the rightmost column in $M$, since it is always the zero vector. This case simplifies significantly. Here, we assume $A$ has rank $r$ and $A\x = \0$ has a positive solution (in $\mathbb{P}^n$), or equivalently, the cone $K = P(A, \0)$ has dimension $n - r$. This assumption is natural, since otherwise, $\alpha_i$ is forced to be $0$ for some $i$, and the $i$th column of $A$ can be removed without losing information.

Stanley defined $A \leftarrow (i, j)$ by taking the $(i, j)$th entry of $A$ as the pivot, and similarly defined $A \leftarrow ((i_1, \dots, i_k); (j_1, \dots, j_k))$ (only for $(i_1, \dots, i_k) = (1, \dots, k)$). Clearly, $A \leftarrow ((i_1, \dots, i_k); (j_1, \dots, j_k))$ can be obtained from $M \leftarrow ((i_1, \dots, i_k); (j_1, \dots, j_k))$ by removing rows $1, 2, \dots, n, n + i_1, \dots, n + i_k$ and columns $j_1, \dots, j_k$.

\begin{theo}\label{theo-n&p}
Suppose the integral matrix $A=(a_{i,j})_{r\times n}$ has rank $r$ and the system $A\x=0$ has a positive solution.
Then each row of $A\leftarrow ((i_1,\dots, i_k);(j_1,\dots, j_k))$ contains at least one positive entry and at least one negative entry.
\end{theo}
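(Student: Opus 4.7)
The plan is to apply Proposition~\ref{theo-change} to reduce to the case $(i_1,\ldots,i_k)=(j_1,\ldots,j_k)=(1,\ldots,k)$. Specializing Equation~\eqref{equ-Mk} to $\b=\0$ and then deleting the rows and columns prescribed by the definition of $A\leftarrow$, I would identify $A\leftarrow((1,\ldots,k);(1,\ldots,k))$ with the Schur complement $B := A_4 - A_3 A_1^{-1} A_2$, where $A=\begin{pmatrix}A_1 & A_2 \\ A_3 & A_4\end{pmatrix}$ and $A_1\in\Z^{k\times k}$ is nonsingular (its determinant being, up to sign, the product of the nonzero pivot items guaranteed by the well-definedness of the pivoting sequence).

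Two facts about $B$ then do all the work. First, by the standard Schur complement rank formula, $\rank B = \rank A - \rank A_1 = r-k$, so $B$ has full row rank and in particular contains no zero row. Second, any positive solution $\x^*\in\mathbb{P}^n$ of $A\x=\0$ restricts to a positive solution of $B\x_2=\0$: writing $\x^*=(\x_1^*,\x_2^*)$ compatibly with the partition, the top block of $A\x^*=\0$ forces $\x_1^*=-A_1^{-1}A_2\x_2^*$, and substitution into the bottom block yields $B\x_2^*=\0$ with $\x_2^*>\0$.

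The theorem then follows by a sign argument. Fix a row $B_s$ of $B$. If all entries of $B_s$ were nonpositive, then $B_s\neq\0$ by the rank fact forces at least one entry to be strictly negative, and strict positivity of $\x_2^*$ would give $B_s\cdot\x_2^*<0$, contradicting $B\x_2^*=\0$. Hence $B_s$ has a positive entry; the symmetric argument, with the signs flipped, produces a negative entry, so every row of $B$ contains entries of both signs.

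The single nontrivial step is identifying the iterated pivot matrix $A\leftarrow$ with the Schur complement so that standard linear-algebraic facts become directly available. Once this identification is in hand, the rank-plus-sign reasoning is essentially immediate, and I do not anticipate any further obstacle.
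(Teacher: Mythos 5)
Your proof is correct, and the core sign argument---a nonzero row $B_s$ pairing to zero against a strictly positive vector $\x_2^*$ must contain entries of both signs---is precisely what drives the paper's proof as well. The difference is organizational. The paper proceeds by induction on $k$: it first proves the case $k=0$ directly, then shows that a single pivot $A\leftarrow(i,j)$ drops the rank to $r-1$ while preserving a positive solution, and iterates. You sidestep the induction by identifying $A\leftarrow((1,\dots,k);(1,\dots,k))$ with the Schur complement $B=A_4-A_3A_1^{-1}A_2$ in one stroke (this is exactly what Equation~\eqref{equ-Mk} gives, restricted to the bottom $r-k$ rows with $\b=\0$), then invoking the Schur complement rank identity $\rank A = \rank A_1 + \rank B$ to conclude $\rank B = r-k$ and hence that $B$ has no zero rows; the positive solution for $B\x_2=\0$ is extracted by the same block substitution the paper performs for one pivot. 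Your route is slicker and leans on a standard named fact where the paper keeps the rank argument self-contained and elementary, but the two are mathematically interchangeable.

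One small imprecision: Proposition~\ref{theo-change} only asserts that permuting the pivot \emph{sequence} leaves the result unchanged; to assume $(i_s,j_s)=(s,s)$ you must additionally relabel the rows and columns of $A$. This is harmless because the conclusion of the theorem is invariant under such relabelings, but it is a separate (trivial) reduction rather than a consequence of the cited proposition, and worth flagging explicitly.
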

\begin{proof}
Suppose $\alpha=(\alpha_1,\dots,\alpha_n)^t \in \mathbb{P}^n$ is a positive solution of $A\x=\0$.
We first prove the theorem for $k=0$. This follows easily
by: i) the row $s$ vector $R_s=(a_{s1},\dots, a_{sn})$ of $A$ cannot be the zero vector,
since otherwise $\rank(A)<r$; ii) the scalar product $R_s \cdot \alpha$ being $0$ implies that
$R_s$ must contain both negative and positive numbers at the same time.

Next, we claim that $B=A\leftarrow(i,j)$ has rank $r-1$ and also has a positive solution.
Then the theorem is followed by an iterative application of the claim and the $k=0$ case of the theorem.

To prove the claim, we assume $(i,j)=(1,1)$ without loss of generality by permuting rows and columns.
If we write $A=(A_1,A_2,\dots,A_n)$, then $B$ is obtained from $B'=(A_2-\frac{a_{12}}{a_{11}}A_1,\dots,A_n-\frac{a_{1n}}{a_{11}}A_1)$
by deleting the first row,  which consists of all zeroes. Thus, $B\x=\0\Leftrightarrow B'\x=\0$.

Now we show that $\alpha'=(\alpha_2,\dots,\alpha_n)^t$ is the desired positive solution for $B\x=\0$.
By $A\alpha=\0$, we have $\sum_{i=2}^{n}a_{1i}\alpha_i=-a_{11}\alpha_1$. Thus,
\begin{equation*}
  B'\alpha'=\sum_{i=2}^{n}(A_i-\frac{a_{1i}}{a_{11}}A_1)\alpha_i
  =\sum_{i=2}^{n}A_i\alpha_i-\frac{\sum_{i=2}^{n}a_{1i}\alpha_i}{a_{11}}A_1 =A \alpha =\0.
\end{equation*}
This completes the proof.
\end{proof}

\begin{rem}\label{rem-b=0}
The \texttt{SimpCone} algorithm in \cite{2023Algebraic} corresponds to the homogeneous case of the \texttt{SimpCone[S2]} algorithm, where
we require that $A\alpha=\0$ has a positive solution.
Its correctness is justified by Theorem \ref{theo-n&p}, which implies that the $\c_i$'s and $\d_i$'s are never updated
in the algorithm.
\end{rem}

For clarity, we state the following lemma to show our strategic flexibility.
\begin{lem}\label{lem-Algo-M}
Let $M=\begin{pmatrix}
         \textit{id}_n\\
         A
        \end{pmatrix}$ with $A$ as in Theorem \ref{theo-n&p}. For any
$i \not\in I$, we have
  \begin{equation}\label{equ-strategies}
\sum_{j:\substack{\gamma_j
                   \text{ is contributing} \\\text{or dually contributing}}} \sgn(p_{k+1})\mathcal{Z}_{\y} [M_k\langle I;J\rangle\leftarrow (i,j)]=0,
\end{equation}
where $p_{k+1}$ is the pivot item, i.e., the $(n+i,j)$th entry in $M_k\langle I;J\rangle$.
\end{lem}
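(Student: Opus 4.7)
The plan is to apply Lemma \ref{lem-Irule} with its two formulas simultaneously: Theorem \ref{theo-n&p} guarantees that in the homogeneous case both $\c_i$ and $\d_i$ stay finite, so the contributing formula \eqref{equ-contri-c} and the dual formula \eqref{equ-dualcontri-d} both express $\CT_{\lambda_i}[M_k\langle I;J\rangle]$. Equating the two will force the contributing and dually contributing pieces to cancel, which is exactly \eqref{equ-strategies}.

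First I would check the degree and pole-order hypotheses. In the homogeneous setup the matrix $M$ has no $-\b$ column, so the numerator exponent $b_0$ appearing in \eqref{deg&pole} is zero, and the values $b_j$ for $j\notin J$ are precisely the $(n+i,j)$-entries of $M_k\langle I;J\rangle$. By the identification stated just before Theorem \ref{theo-n&p}, these entries coincide with the $i$-th row of $A\leftarrow((i_1,\dots,i_k);(j_1,\dots,j_k))$. Theorem \ref{theo-n&p} guarantees this row contains at least one strictly positive and at least one strictly negative integer entry, so
\[
\deg_{\lambda_i}[M_k\langle I;J\rangle]\le -1<0,\qquad \ord_{\lambda_i}[M_k\langle I;J\rangle]\le -1<0.
\]
Hence $\c_i<\infty$ and $\d_i<\infty$ (no $\infty$-updates occur), and moreover the strict negativity of the pole order ensures $[M_k\langle I;J\rangle]\big|_{\lambda_i=0}=0$, so the dual formula indeed takes the clean form \eqref{equ-dualcontri-d} without an extra boundary term.

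Now denote by $C$ and $D$ the contributing and dually contributing partial sums appearing in \eqref{equ-strategies}. By the definition of contributing/dually contributing, every column $j\notin J$ with nonzero pivot item $p_{k+1}$ falls into exactly one of the two categories, so the sum in \eqref{equ-strategies} equals $C+D$. Applying Lemma \ref{lem-Irule} twice gives
\[
C=\CT_{\lambda_i}[M_k\langle I;J\rangle]=-D,
\]
hence $C+D=0$, as desired.

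The argument is essentially a two-line corollary of Lemma \ref{lem-Irule} and Theorem \ref{theo-n&p}, so I do not foresee a substantive obstacle. The only bookkeeping point to watch is the identification of the $i$-th row of $A\leftarrow((i_1,\dots,i_k);(j_1,\dots,j_k))$ with the portion of row $n+i$ of $M_k\langle I;J\rangle$ that actually feeds into \eqref{deg&pole}, so that the hypothesis of Theorem \ref{theo-n&p} translates cleanly into the two strict inequalities above.
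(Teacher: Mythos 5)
Your proof is correct and follows essentially the same route as the paper: both invoke Theorem \ref{theo-n&p} to see that row $n+i$ has entries of both signs, use \eqref{deg&pole} to conclude that the degree and pole order are strictly negative, and then equate the contributing and dual expansions to force cancellation. The paper phrases the last step via Corollary \ref{cor-contri-dual} followed by Theorem \ref{theo-single}, while you apply both formulas of Lemma \ref{lem-Irule} (which is itself derived from those two results), so the arguments are substantively identical.
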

\begin{proof}
By Theorem \ref{theo-n&p}, the $n+i$th row of $M_k\langle I;J\rangle$ has at least one positive entry and at least one negative entry. Applying Equation \eqref{deg&pole}, we find that both $\deg_{\lambda_i}([M_k\langle I;J\rangle])$ and $\ord_{\lambda_i}([M_k\langle I;J\rangle])$ are negative. Thus by Corollary \ref{cor-contri-dual}, we have $$\sum_{j:\substack{\gamma_j
                   \text{ is contributing} \\\text{or dually contributing}}} \CT_{\lambda, j}[M_k\langle I;J\rangle]=0.$$
This is just the desired \eqref{equ-strategies} by Theorem \ref{theo-single}.
\end{proof}

Lemma \ref{lem-Algo-M} allows us to establish certain linear relationships among the $\mathcal{Z}_{\y}[M_{r}\langle J\rangle]$'s.

\begin{exam}\label{exam-homo}
Suppose $A=\begin{pmatrix}
           1 & -2 & 1 & -1 \\
           -1 & 2 & 3 & -1
         \end{pmatrix}$ and $M=\begin{pmatrix}
                           \textit{id}_4 \\
                           A
                         \end{pmatrix}$.

Applying Algorithm \ref{algor-Simpcone} with strategy \texttt{S0} gives
\begin{equation*}
  \sigma_K(\y)=-\mathcal{Z}_{\y}[M_2\langle\{1,3\}\rangle]+\mathcal{Z}_{\y}[M_2\langle \{1,4\}\rangle]+\mathcal{Z}_{\y}[M_2\langle\{2,3\}\rangle]+\mathcal{Z}_{\y}[M_2\langle\{3,4\}\rangle],
\end{equation*}
and using strategy \texttt{S2} gives only the second term.
This equality can be explained
using Equation \eqref{equ-strategies}.
\end{exam}
\begin{proof}[Solution]
The matrix $A$ has 5 second-order nonzero minors, giving rise to $\sigma_{K_J}=\mathcal{Z}_{\y}[M_2\langle J\rangle]
$ where $J$ belongs to $\{\{1,3\},\{1,4\},\{2,3\},\{2,4\},\{3,4\}\}.$
We obtain the following linear relations among them:
\begin{align}\label{equ-linear}
\begin{cases}
 & \mathcal{Z}_{\y}[M_2\langle \{1,3\}\rangle]=\mathcal{Z}_{\y} [M_2\langle \{1,4\}\rangle];\\
 &\mathcal{Z}_{\y} [M_2\langle\{1,4\}\rangle]=\mathcal{Z}_{\y} [M_2\langle \{2,4\}\rangle]+\mathcal{Z}_{\y} [M_2\langle \{3,4\}\rangle];\\
 & \mathcal{Z}_{\y}[M_2\langle \{2,3\}\rangle]=\mathcal{Z}_{\y} [M_2\langle\{2,4\}\rangle],
 \end{cases}
\end{align}
which means $\sigma_{K_J}$ is spanned by two of them corresponding to $\sigma_{K_{\{2,4\}}}$ and $\sigma_{K_{\{3,4\}}}$.

Equation \eqref{equ-linear} is obtained by extending the application of \eqref{equ-strategies} to the following 8 matrices:
\begin{align*}
  &M_1\langle\{1\};\{1\}\rangle,M_1\langle\{1\};\{2\}\rangle,M_1\langle\{1\};\{3\}\rangle,M_1\langle\{1\};\{4\}\rangle, \\& M_1\langle\{2\};\{1\}\rangle,
  M_1\langle\{2\};\{2\}\rangle,M_1\langle\{2\};\{3\}\rangle,M_1\langle\{2\};\{4\}\rangle.
\end{align*}
We illustrate the idea by considering the fourth matrix. We have
 $$M_1\langle\{1\};\{4\}\rangle=\begin {pmatrix} 1&0&0& {\color{red}0}\\ 0&1&0&{\color{red}0}\\ 0&0&1&{\color{red}0}\\ 1&-2
&1&{\color{red}1}\\ {\color{red}0}&{\color{red}0}&{\color{red}0}&{\color{red}\textcircled{-1}}\\ -2&4&2&{\color{red}-1}
\end{pmatrix}.$$
By Equation \eqref{equ-strategies}, we derive the relationship
$$-\mathcal{Z}_{\y} [M_2\langle \{1,4\}\rangle]+\mathcal{Z}_{\y} [M_2\langle \{2,4\}\rangle]+\mathcal{Z}_{\y} [M_2\langle \{3,4\}\rangle]=0.$$
In this way, we obtain $8$ linear equations, which are linearly equivalent to those in \eqref{equ-linear}.

These relations can be verified by direct computation. We have
$$M_2\langle \{1,4\}\rangle=\begin{pmatrix}
                                  2 &1 \\
                                  1 &0 \\
                                  0 &1 \\
                                  0 &2
                    \end{pmatrix},~M_2\langle\{2,4\}\rangle=\begin{pmatrix} 1&0\\ {1/2}&{
-1/2}\\ 0&1\\ 0
&2\end{pmatrix},~
M_2\langle \{3,4\}\rangle=\begin{pmatrix}1&0\\ 0&1
\\ 1&-2\\ 2&-4\end{pmatrix}
.$$
Consequently, we have
\begin{align*}
\mathcal{Z}_{\y}[M_2\langle \{2,4\}\rangle]+\mathcal{Z}_{\y}[M_2\langle \{3,4\}\rangle]
&=\frac{1+y_1y_3y_4^2}{(1-y_1^2y_2)(1-y_2^{-1}y_3^2y_4^4)}+\frac{1}{(1-y_1y_3y_4^2)(1-y_2y_3^{-2}y_4^{-4})}\\
&=\frac{1}{(1-y_1^2y_2)(1-y_1y_3y_4^2)}=\mathcal{Z}_{\y}[M_2\langle \{1,4\}\rangle].
\end{align*}
This verifies the second equality in \eqref{equ-linear}. The others can be verified similarly.
\end{proof}

Theorem \ref{theo-n&p} provides a positive solution of \cite[Remark 5.4]{2007A}. This, together with Algorithm \ref{algor-Simpcone}, leads to a new proof of
Stanley's reciprocity theorem.
Before introducing Stanley's reciprocity theorem, for a cone \( K = P(A,\0) \), we define \(\bar{\sigma}_K(\mathbf{y}) := \sum_{\alpha \in K \cap \mathbb{P}^n} \mathbf{y}^\alpha \).
\begin{cor}[Stanley's reciprocity theorem]
\label{cor-3-recip}
Let $A$ be an $r$ by $n$ integral matrix of full rank $r$. If
there is at least one $\alpha\in \mathbb{P}^n$ such that $A\alpha
=\0$, then we have as rational functions,
\begin{equation*}
    \sigma_K(\y)= (-1)^{n-r} \bar{\sigma}_K(\y^{-1}).
\end{equation*}
\end{cor}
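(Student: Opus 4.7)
The plan is to run Algorithm \ref{algor-Simpcone} on two closely related systems and combine the outputs via a termwise (single-cone) Stanley reciprocity. First, I exploit that $A\alpha=\0$ admits a positive solution: by Theorem \ref{theo-n&p} and Lemma \ref{lem-Algo-M}, the ``all-contributing'' strategy is admissible at every step. Running Algorithm \ref{algor-Simpcone} on $(A,\0)$ with this strategy produces
\[
\sigma_K(\y)=\sum_J s_J\, \sigma_{K_J}(\y),
\]
where each $K_J$ is a simplicial cone of dimension $n-r$ and, by Lemma \ref{lem-CT-cone}, the coefficient $s_J=\sgn(\det A_J)$ depends only on $J$ and on $A$.

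Next I would prove a termwise reciprocity and decompose $\bar{\sigma}_K$ using the very same coefficients. For each piece, writing $[M_r\langle J\rangle]=\prod_{j\notin J}(1-\y^{\gamma_j})^{-1}$ and applying the rational-function identity $(1-\y^{-\gamma})^{-1}=-\y^{\gamma}(1-\y^{\gamma})^{-1}$ factor by factor yields
\[
\sigma_{K_J}(\y^{-1})=(-1)^{n-r}\,\mathcal{Z}_{\y}\!\bigl[\y^{\sum_{j\notin J}\gamma_j}[M_r\langle J\rangle]\bigr],
\]
and the integer-combination argument from the proof of Theorem \ref{theo-CT-cone}, applied with shifted vertex $v=\sum_{j\notin J}\gamma_j$, identifies the right-hand side with $(-1)^{n-r}\bar{\sigma}_{K_J}(\y)$: the $\textit{id}_{n-r}$-block of $M_r\langle J\rangle$ forces every lattice point $\sum c_j\gamma_j\in\Z^n$ of $K_J^\circ$ to satisfy $c_j\in\mathbb{P}$, so the interior lattice points coincide with the lattice points of the shifted cone. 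To decompose $\bar{\sigma}_K$ itself, I use the substitution $\alpha=\beta+\1$ to rewrite $\bar{\sigma}_K(\y)=\y^{\1}\sigma_{P(A,-A\1)}(\y)$ and then rerun Algorithm \ref{algor-Simpcone} on $(A,-A\1)$ with the \emph{same} ``all-contributing'' strategy. The key observation is that the pivot items live in columns $\le n$ and, together with the small/large classification of each column, depend only on the $A$-block and not on the last column $-\b$; hence the two runs share the same branching tree and the same signs $s_J$. A direct computation via Equation~\eqref{equ-Mk} gives $v_J+\1=\sum_{j\notin J}\gamma_j$, so $\y^{\1}\sigma_{K_J^{v_J}}(\y)=\sigma_{K_J^{\sum\gamma_j}}(\y)=\bar{\sigma}_{K_J}(\y)$, and consequently $\bar{\sigma}_K(\y)=\sum_J s_J\,\bar{\sigma}_{K_J}(\y)$.

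Finally, substituting $\y\to\y^{-1}$ in the first decomposition and combining with the two preceding steps,
\[
\sigma_K(\y^{-1})=\sum_J s_J\,\sigma_{K_J}(\y^{-1})=(-1)^{n-r}\sum_J s_J\,\bar{\sigma}_{K_J}(\y)=(-1)^{n-r}\bar{\sigma}_K(\y),
\]
which rearranges to the desired $\sigma_K(\y)=(-1)^{n-r}\bar{\sigma}_K(\y^{-1})$. The main technical hurdle will be the inductive verification that the ``all-contributing'' branch remains admissible throughout the elimination for the inhomogeneous run $(A,-A\1)$ -- i.e.\ that $\deg_{\lambda_i}<0$ is preserved by the column operations -- so that the two algorithm traces really do follow the identical chain of pivots and produce identical signs.
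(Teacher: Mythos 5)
Your proposal is correct in spirit but takes a genuinely different route from the paper's. The paper does not introduce the inhomogeneous system $(A,-A\1)$ at all: instead it expresses $\bar\sigma_K(\y^{-1}) = (-1)^n \CT_\Lambda[M']$ where $M'=\begin{pmatrix}\textit{id}_n\\-A\end{pmatrix}$, and compares the run of Algorithm~\ref{algor-Simpcone} on $[M]$ in $G$ with the run on $[M']$ in the reversed field $\bar G$ (where $\y^\alpha<1$ in $G$ iff $\y^\alpha>1$ in $\bar G$). Since the sign flip lives entirely in the $\Lambda$-block of the matrix, the small/large classification of each column and the pivot choices agree exactly between $[M]$ in $G$ and $[M']$ in $\bar G$, for \emph{any} strategy; the two runs therefore branch identically with no further admissibility argument needed. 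Both systems remaining homogeneous is exactly what makes Theorem~\ref{theo-n&p} apply symmetrically on both sides and lets the paper finish by elementary sign bookkeeping, $\sgn\det A_J = (-1)^r\sgn\det(-A)_J$, together with the fact that the $\y$-rows of $M_r\langle J\rangle$ and $M'_r\langle J\rangle$ coincide.

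Your route instead writes $\bar\sigma_K(\y) = \y^\1\sigma_{P(A,-A\1)}(\y)$, re-runs the algorithm on the inhomogeneous system, and proves a termwise reciprocity $\sigma_{K_J}(\y^{-1}) = (-1)^{n-r}\sigma_{K_J^{\sum\gamma_j}}(\y)$ using the factor-by-factor identity $(1-\y^{-\gamma})^{-1}=-\y^\gamma(1-\y^\gamma)^{-1}$ and the $\textit{id}_{n-r}$-block argument from Theorem~\ref{theo-CT-cone}. This is conceptually appealing (each piece reciprocates on its own), and the vertex computation $v_J+\1=\sum_{j\notin J}\gamma_j$ via \eqref{equ-Mk} is correct. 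The ``technical hurdle'' you flag is real but fillable: the column operations preserve the invariant $\gamma_{n+1}=\sum_{s\leq n,\,s\notin J}\gamma_s - (\1_n,\0)^t$, whose $(n+i)$-th coordinate gives $b_0=\sum_{s}b_s$ and hence $\deg_{\lambda_i}=b_0-\sum_{b_s>0}b_s=\sum_{b_s<0}b_s<0$ by Theorem~\ref{theo-n&p}, so the all-contributing branch stays admissible and the branching tree of the $(A,-A\1)$ run matches the homogeneous one. Once that invariant is stated and checked, your argument closes. The paper's choice to compare two homogeneous systems is the tidier design because it avoids this verification entirely and works for an arbitrary strategy rather than the specific all-contributing one, but your approach is a valid and illuminating alternative.
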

\begin{proof}
 First, consider $\sigma_K(\y)=\CT_{\Lambda} [M]$, where $M=\left[\begin{pmatrix}
         \textit{id}_n\\
         A
        \end{pmatrix}\right]$, by working in the field $G$ of iterated Laurent series.
For any strategy \texttt{S}, applying Algorithm \ref{algor-Simpcone} gives
$$ \CT_{\Lambda} [M] = \sum \CT\limits_{\lambda_{i_r},j_r}\cdots\CT\limits_{\lambda_{i_2},j_2}\CT\limits_{\lambda_{i_1},j_1}[M],$$
where the sum ranges over a finite set $\mathcal{F}$ of sequences $((i_1,j_1),\dots, (i_r,j_r))$.

Furthermore, the constant term formula for $\bar{\sigma}_K(\y)$ is known to be
\begin{align*}
\bar{\sigma}_K(\y)&=\CT_{\Lambda}\prod_{j=1}^n \frac{ \Lambda^{A_j} y_j}{ 1-\Lambda^{A_j} y_j}=(-1)^n\CT_{\Lambda} \frac{1}{\prod_{j=1}^n (1-\Lambda^{-A_j} y_j^{-1})}\\
&\Longrightarrow(-1)^n\CT_{\Lambda}\left[-M\right] \big|_{y_\ell\to y_\ell^{-1}}=(-1)^n\CT_{\Lambda}\left[M'\right]=\bar{\sigma}_K(\y^{-1}),
\end{align*}
where $M'=\begin{pmatrix}
         \textit{id}_n\\
         -A
        \end{pmatrix}$ and the working field becomes $\bar{G}$. The rule is that $y^\alpha <1$ in $G$ if and only if
$y^\alpha>1$ in $\bar G$.

 Thus the contributing property remains the \emph{same} in $\bar{G}$: If the $j$th column of $M_k\langle I;J\rangle$ is $\gamma=(\gamma_1,\gamma_2)^t$, where $\gamma_1\in\Q^{n}$ and $\gamma_2\in\Q^{r}$, then the $j$th column of $M'_k\langle I;J\rangle$ is $\gamma'=(\gamma_1,-\gamma_2)^t$. Clearly,
 $\gamma ~\text{is~small in } G \Leftrightarrow -\gamma' ~\text{is~small in } \bar{G}$.

Applying Algorithm \ref{algor-Simpcone} to $[M']$ according to the same strategy \texttt{S}. We obtain
$$\CT_{\Lambda}\left[M' \right]=  \sum  \CT\limits_{\lambda_{i_r},j_r}\cdots\CT\limits_{\lambda_{i_2},j_2}\CT\limits_{\lambda_{i_1},j_1} [M'],$$
where the sum ranges over the same set $\mathcal{F}$.

The result is then followed by
\begin{align*}
\CT\limits_{\lambda_{i_r},j_r}\cdots\CT\limits_{\lambda_{i_2},j_2}\CT\limits_{\lambda_{i_1},j_1} [M]
=&\sgn\left(A\begin{pmatrix}
i_1,\dots,i_{r}\\
j_1,\dots,j_{r}
\end{pmatrix}\right)\mathcal{Z}_{\y}[M_r\langle \{i_1,\dots,i_{r}\};\{j_1,\dots,j_{r}\}\rangle]\\
=&(-1)^r\sgn\left((-A)\begin{pmatrix}
i_1,\dots,i_{r}\\
j_1,\dots,j_{r}
\end{pmatrix}\right)\mathcal{Z}_{\y}[M'_r\langle \{i_1,\dots,i_{r}\};\{j_1,\dots,j_{r}\}\rangle]\\
=&(-1)^r\CT\limits_{\lambda_{i_r},j_r}\cdots\CT\limits_{\lambda_{i_2},j_2}\CT\limits_{\lambda_{i_1},j_1}  [M'].
\end{align*}
\end{proof}

\begin{rem}
The proof naturally extends for Stanley's reciprocity domain theorem, and a generalization \cite[Corollary 3.7]{2007A}.
\end{rem}

\section{On Unimodular Cone Decomposition}\label{sec:unimodular}
After obtaining the simplicial cone decomposition as in \eqref{equ-dec-K} in Step 1), one needs Steps 2) and 3) to solve the lattice point counting problem.
For Step 3), we refer to \cite{XinTodd}. Here, we discuss Step 2) on unimodular cone decomposition of a given simplicial cone $K$.

\subsection{Existing algorithms}
A classical approach for Step 2) is to use Barvinok's key result.
\begin{theo}[\cite{Ba1999}, Theorem 4.2]\label{theo-Ba1999}
Fix the dimension $d$. There exists a polynomial-time algorithm that, given a rational polyhedral cone $K\subset \R^d$, computes unimodular cones $K_i : i\in I$ and numbers $s_i\in\{-1,1\}$ such that
\begin{equation}\label{equ-unimodular}
\sigma_K(\y)=\sum_{i\in I}s_i \sigma_{K_i}(\y)=\sum_{i \in I} s_i\frac{\y^{\gamma_{i,0}}}{\prod_{k=1}^{n-1} (1- \y^{\gamma_{i,k}})}.
\end{equation}
\end{theo}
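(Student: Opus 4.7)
The plan is to follow Barvinok's signed short-vector approach. Given a full-dimensional simplicial cone $K = K(w_1,\dots,w_d)$ of index $q = \ind(K) = |\det(w_1,\dots,w_d)|$, if $q = 1$ then $K$ itself is unimodular and there is nothing to do. Otherwise, I would look for a nonzero integer vector $\omega \in \Z^d$ that is ``short'' relative to the generators: writing $\omega = \sum_{i=1}^d \alpha_i w_i$ with $\alpha_i \in \Q$, I want $\max_i |\alpha_i| \leq q^{-1/d}$, equivalently $\omega$ should lie in a centrally symmetric parallelepiped of volume $2^d$ (after rescaling by $q^{1/d}$). Minkowski's first theorem guarantees such an $\omega$ exists, and in fixed dimension $d$ it can be found in polynomial time via LLL reduction applied to the lattice spanned by $w_1,\dots,w_d$ with a suitably chosen inner product (or equivalently a short vector in the dual lattice).

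Given such an $\omega$, I would form $d$ new simplicial cones $K_1,\dots,K_d$ by replacing $w_i$ with $\omega$ in $K$, i.e.\ $K_i = K(w_1,\dots,w_{i-1},\omega,w_{i+1},\dots,w_d)$ (dropping those $K_i$ that are not full-dimensional, i.e.\ when $\alpha_i = 0$). Using the inclusion--exclusion / Brion-type identity, I would establish the signed relation
\[
\sigma_K(\y) \;=\; \sum_{i:\,\alpha_i \neq 0} s_i\, \sigma_{K_i}(\y) \;+\; (\text{lower dimensional boundary terms}),
\]
where $s_i = -\sgn(\alpha_i)$. The boundary terms, coming from faces shared among the $K_i$, can be handled by recursively applying the same procedure to the lower-dimensional cones (and in fact, by a standard perturbation / Brion-polarization trick, one can arrange for all the boundary contributions to cancel). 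The crucial numerical observation is that
\[
\ind(K_i) \;=\; |\alpha_i| \cdot q \;\leq\; q^{(d-1)/d},
\]
so the index of each subcone shrinks geometrically on the logarithmic scale.

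Now I would iterate: apply the same construction to each $K_i$ whose index still exceeds $1$. Because $\log \ind$ decreases by the factor $(d-1)/d$ at every level, the recursion terminates after at most $O\!\bigl(\log \log q\bigr)$ levels in fixed dimension. At each level the number of cones multiplies by at most $d$, giving a total of $(\log q)^{O(d)}$ unimodular cones, which is polynomial in the input size when $d$ is fixed. Combining the signed expansions at every level yields the desired identity \eqref{equ-unimodular} with $s_i \in \{-1,+1\}$.

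The main obstacle is twofold. First, one must produce the short vector $\omega$ in polynomial time; in fixed dimension this is supplied by LLL (or even exact shortest-vector algorithms), but verifying the correct norm bound relative to the $w_i$'s requires choosing the right lattice and inner product. Second, one must carefully handle the lower-dimensional face contributions in the signed decomposition, since a naive expansion on the boundary produces many redundant terms; the clean way is to work with ``cones modulo lines'' (indicator functions up to lineality spaces) in the Grothendieck-style algebra of polyhedra, as Barvinok does, so that all subcones appearing in the identity are either full-dimensional or carry indicator functions that vanish identically.
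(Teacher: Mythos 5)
The paper does not prove this theorem; it is quoted verbatim from Barvinok and Pommersheim and used as a black box in Section 5, so there is no in-text proof for your attempt to be compared against.

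That said, your sketch is a faithful rendering of Barvinok's original signed decomposition argument. The short-vector step (Minkowski plus LLL to find $\omega = \sum_i \alpha_i w_i$ with $\max_i|\alpha_i| \leq q^{-1/d}$, where $q = \ind(K)$), the swap $w_i \mapsto \omega$ producing cones with $\ind(K_i) = |\alpha_i|\,q \leq q^{(d-1)/d}$, the $O(\log\log q)$ recursion depth and the resulting $(\log q)^{O(d)}$ bound on the number of unimodular cones are all correct and are exactly the ingredients in the cited source. One small point of care: you need strict inequality (or work with the closed box) to invoke Minkowski, and you need $|\alpha_i| < 1$ so the index actually drops; both are handled in Barvinok's paper but glossed over in your sketch.

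The genuine gap is the treatment of lower-dimensional boundary terms. You assert that a ``perturbation / Brion-polarization trick'' makes the faces cancel, and you gesture at ``cones modulo lines,'' but you never establish the signed identity
\[
[K] \;=\; \sum_{i:\,\alpha_i\neq 0} \sgn(\alpha_i)\,[K_i] \pmod{\text{lower-dimensional cones}},
\]
nor do you explain why the lower-dimensional pieces do not accumulate across the $O(\log\log q)$ levels of recursion (naively they multiply too). The clean resolutions in the literature are (a) dualize first: decompose $K^*$ and use the fact that lower-dimensional cones have $\sigma \equiv 0$ after dualizing back, or (b) use half-open (irrational/perturbed) decompositions so that no faces are ever shared. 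Without committing to one of these and carrying it through, your argument is a plan rather than a proof; with either fix it becomes the standard proof of the theorem you cite.
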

Barvinok's algorithm shows that when the dimension is fixed, the size $|I|$ can be bounded by a polynomial in $\ind(K)$. Such a sum is usually referred to as a \emph{short sum}. Barvinok's idea has been implemented by the C-package \texttt{LattE}, which is appraised as the state of the art algorithm. See \cite{LattE} and a manual of \texttt{LattE}  in \cite{LattEManual}.

There is a parallel development for unimodular cone decomposition for a special type of simplicial cone, named knapsack cone
or denumerant cone, as discussed in \cite{xin2024combinatorial}. In constant term language, the \texttt{DecDenu} algorithm provides a rapid computation of the underlined constant term
\begin{equation}\label{equ-single-DecDenu}
\CT_\lambda \frac{\lambda^{b}}{\underline{(1- \lambda^{a_1} y_1)} \prod_{i=2}^n (1- \lambda^{a_i} y_i)}
\end{equation}
as the sum in \eqref{equ-unimodular}. Note that when $a_1=1$, the constant term is a single term obtained by a simple substitution.

Computer experiments indicate that the \texttt{DecDenu} algorithm may outperform the \texttt{LattE} package for random $a_i \in \mathbb{N}$, despite the fact that the \texttt{DecDenu} algorithm has not been proven to be polynomial-time.

Now we describe how to use the \texttt{DecDenu} algorithm to effectively decompose $\sigma_P(\y)$ as a \emph{nearly short sum}, which is the right-hand side of \eqref{equ-unimodular}.
We add ``nearly" here because we cannot prove a polynomial bound.

\subsection{On simplicial cone of the form $K^{\mydot}(M_r\langle J\rangle)$}
We consider $K^v=K^{\mydot}(M_r\langle J\rangle)$. We assume $J=[r]$ without loss of generality.
In \eqref{equ-Mk}, we have  obtained
\begin{equation}\label{equ-Mr}
M_r\langle [r]\rangle=\begin{pmatrix}
-{A_1}^{-1}A_2&{A_1}^{-1}\b\\
\textit{id}_{n-r}&\0
\end{pmatrix}.
\end{equation}

When $r=1$, $A_1=(a_1)$, the cone is referred to as a knapsack cone or denumerant cone in \cite{xin2024combinatorial}.

We now outline the procedure for decomposing $K^v$ by iterative use of the \texttt{DecDenu} algorithm. The complexity of this procedure is measured by
$\det(A_1)$. This differs from that in \texttt{LLLCTEuclid}, which also iteratively uses the \texttt{DecDenu} algorithm, but the complexity has no obvious bound.

We first compute the Smith normal form of the submatrix $A_1$ as $H = UA_1V$, where $U$ and $V$ are unimodular matrices and $H$ is a diagonal matrix with diagonal entries $h_{1},h_{2},\dots,h_{r}$
satisfying that $h_{i}|h_{i+1}$ for all $1\leq i\leq r-1$.

Now we construct the matrix
\begin{align}\label{eq_N}
 \hat{M}=\begin{pmatrix}
         \textit{id}_{r} & \0 & \0\\
         \0 & \textit{id}_{n-r} & \0 \\
         H & U A_2  &-U\b
      \end{pmatrix}
\end{align}
such that
\begin{align}\label{e-M-M^}
    \hat{M}_r\langle [r]\rangle = \begin{pmatrix}
-V^{-1}{A_1}^{-1}A_2&V^{-1}{A_1}^{-1}\b\\
\textit{id}_{n-r}&\0
\end{pmatrix} = \text{diag}(V^{-1},\textit{id}_{n-r})  M_r\langle [r]\rangle.
\end{align}
This induces an isomorphism from the shifted cone $K^{\mydot}(M_r\langle [r]\rangle)$ to the shifted cone $K^{\mydot}(\hat{M}_r\langle [r]\rangle)$
through the unimodular transformation $\text{diag}(V,\textit{id}_{n-r})$.

Define the action of a nonsingular matrix \( W = (w_{ij})_{n \times n} \) by
$$ W \circ F(\y) = F(\y^{W e_1}, \y^{W e_2}, \dots, \y^{W e_n}), \quad \text{where} \quad  \y^{W e_i} = y_1^{w_{1i}} \cdots y_n^{w_{ni}}. $$

Then we have
\begin{align}\label{equ-M-M^}
  \mathcal{Z}_{\y}[ M_r\langle [r]\rangle] & = \mathcal{Z}_{\y}[ \text{diag}(V,id_{n-r}) \hat M_r\langle [r]\rangle]  \\
 \notag  & =\text{diag}(V,\textit{id}_{n-r})\circ \mathcal{Z}_{\y}[\hat M_r\langle [r]\rangle] \\
 \notag  & =\text{diag}(V,\textit{id}_{n-r})\circ \CT_{\lambda_{r},r}\cdots\CT_{\lambda_{2},2}\CT_{\lambda_{1},1} [\hat{M}].
\end{align}

For this iterated constant term, observe that
\begin{align*}
[\hat{M}] &= \frac{\Lambda^{U\b}}{(1-\lambda_1^{h_{1}} y_1)(1- \lambda_2^{h_{2}} y_2)\cdots (1- \lambda_r^{h_{r}} y_r) \prod_{i=1}^{n-r} (1- \Lambda^{\alpha_{r+i}}y_{r+i}) },
\end{align*}
where $\alpha_{r+i}$ is the $i$-th column of $UA_2$. Then the iterated constant term
\begin{equation}\label{e-success-CT}
\CT\limits_{\lambda_{r},r}\cdots\CT\limits_{\lambda_{2},2}\CT\limits_{\lambda_{1},1} [\hat{M}]=\CT\limits_{\lambda_{r}}\frac{1}{\underline{1-\lambda_r^{h_{r}} y_r}}\cdots\CT\limits_{\lambda_{1}} \frac{1}{\underline{1-\lambda_1^{h_{1}}y_1}}\cdot\frac{\Lambda^{U\b}}{\prod_{i=1}^{n-r} (1- \Lambda^{\alpha_{r+i}}y_{r+i})}
\end{equation}
can be successively computed using the \texttt{DecDenu} algorithm.

We summarize the above as the following algorithm:
\begin{algor}\label{alg_simpconeANDdecdenu}
  Input: Integral matrix $A$ and $\b$ defining the polyhedron $P=P(A,\b)$.

  Output: The nearly short sum of rational functions for $\sigma_P(\y)$ as in \eqref{equ-unimodular}.

  \begin{itemize}
    \item[1] Apply \texttt{Simpcone[S]} to obtain the set $O_r=\{(s_1, O_{r,1},J_{r,1}),\dots,(s_N, O_{r,N}, J_{r,N})\}$.

    \item[2] For each pair $(s_i, J_{r,i})$ (we omit the $O_{r,i}$) in $O_r$, compute $s_i\mathcal{Z}_\y [M_r\langle J_{r,i}\rangle]$ as follows.
    \begin{itemize}
      \item[2.1] Construct the matrix $\hat{M}$ corresponding to $J_{r,i}$  by \eqref{eq_N}.
      \item[2.2] Compute $\mathcal{Z}_\y[\hat{M}_r\langle J_{r,i}\rangle]$ using \eqref{e-success-CT} by applying the \texttt{DecDenu} algorithm.
      \item[2.3] Apply $s_i\text{diag}(V,\textit{id}_{n-r})$ to each rational function from the above.
    \end{itemize}
     \item[3] Take the sum over all $i$ gives the desired output.
  \end{itemize}
\end{algor}

\begin{remark}\label{rem-probability}
If $h_{i}=1$, then the operation $\CT_{\lambda_i,i}$ acts simply through a substitution. Thus in Step 2.2,
the action of $\CT_{\lambda_i,i}$ is only critical for those $h_i>1$. According to \cite[Theorem 4.13]{2015The},
if $A_1 \in \mathbb{Z}^{r\times r}$ is a random nonsingular matrix. Its Smith Normal Form $H$ has $84 \%$ probability
of having $h_{i}=1$ for all $i<r$, and $99 \%$ probability of having $h_{i}=1$ for $i<r-1$.
This indicates that in Step 2.2, we rarely use the \texttt{DecDenu} algorithm more than two times.
\end{remark}
Conclusively, based on the preceding analysis, we arrive at the following result for the shifted simplicial cone $K^v$.
\begin{theo}\label{theo-Denum}
Suppose a shifted simplicial cone $K^v$ is defined as  $K^{\mydot}(WM_r\langle J\rangle)$, where $W$ is a unimodular matrix. Then, utilizing Step 2 of Algorithm \ref{alg_simpconeANDdecdenu} may unimodularly decompose $\sigma_{K^v}(\y)$ as a nearly short sum in \eqref{equ-unimodular}.
\end{theo}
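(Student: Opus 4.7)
The plan is to assemble the ingredients already developed in this section. First I would reduce to the case $W = \textit{id}_n$ and $J = [r]$. Since $W$ is unimodular, it acts on rational functions through the substitution $W \circ F(\y) = F(\y^{We_1}, \ldots, \y^{We_n})$ defined just before \eqref{equ-M-M^}, and one checks directly that this substitution sends a sum of unimodular short terms of the form \eqref{equ-unimodular} to another such sum. The reduction $J \to [r]$ is a trivial relabeling of the $y_j$ variables. Hence it suffices to decompose $\sigma_{K^{\mydot}(M_r\langle [r]\rangle)}(\y) = \mathcal{Z}_\y[M_r\langle [r]\rangle]$ as a nearly short sum.

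Next I would invoke the Smith-normal-form identity \eqref{equ-M-M^}, which reduces the problem to computing the iterated constant term \eqref{e-success-CT} on $[\hat M]$ and then acting on the result by the unimodular substitution $\textrm{diag}(V, \textit{id}_{n-r})$. Because $V$ is unimodular, so is $\textrm{diag}(V, \textit{id}_{n-r})$, so this last substitution preserves the nearly-short-sum structure term by term; the signs appearing through Lemma \ref{lem-CT-cone} are all $+1$ since the diagonal entries $h_1, \ldots, h_r$ of $H$ are positive.

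The heart of the proof is the iterative use of the \texttt{DecDenu} algorithm on \eqref{e-success-CT}. The crucial structural observation is that in $[\hat M]$ the diagonal factor $1 - \lambda_i^{h_i} y_i$ involves only the single variable $\lambda_i$ among the $\lambda$'s, while the remaining factors $1 - \Lambda^{\alpha_{r+i}} y_{r+i}$ may depend on all of $\lambda_1, \ldots, \lambda_r$. When the innermost operator $\CT_{\lambda_1}$ with $1 - \lambda_1^{h_1} y_1$ underlined is applied, the expression is precisely of the denumerant-type form \eqref{equ-single-DecDenu}, so \texttt{DecDenu} returns a short sum of unimodular terms in the variables $(\y, \lambda_2, \ldots, \lambda_r)$. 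Since each factor $1 - \lambda_j^{h_j} y_j$ for $j \geq 2$ is free of $\lambda_1$, it survives intact in the denominator of every summand emitted by \texttt{DecDenu}, so each summand is again denumerant-type in $\lambda_2$ with $1 - \lambda_2^{h_2} y_2$ underlined; the induction continues. After $r$ iterations we obtain the desired nearly short sum of unimodular cones, which combined with the substitution from the previous paragraph gives \eqref{equ-unimodular} for $\sigma_{K^v}(\y)$.

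The main obstacle I anticipate is making the inductive invariant precise: one must verify that after each \texttt{DecDenu} step every term of the resulting sum still contains the next underlined factor $1 - \lambda_{i+1}^{h_{i+1}} y_{i+1}$ in its denominator and does not acquire new denominator factors in $\lambda_{i+1}$ of an inconvenient form. This hinges on the diagonal structure of $H$ together with the fact that \texttt{DecDenu}, viewed as a single-variable constant-term operator, leaves untouched those denominator factors that are free of its variable. A secondary bookkeeping task is tracking the numerator $\Lambda^{U\b}$ and the net sign through the substitutions, both of which are straightforward given Lemma \ref{lem-CT-cone} and the positivity of the $h_i$.
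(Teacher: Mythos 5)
Your proposal is correct and takes essentially the same approach as the paper: the paper's proof is a one-liner that simply observes $\sigma_{K^v}(\y) = W \circ \mathcal{Z}_\y[M_r\langle J\rangle]$ and defers the computation of $\mathcal{Z}_\y[M_r\langle J\rangle]$ to Step 2 of Algorithm \ref{alg_simpconeANDdecdenu}, which is precisely the Smith-normal-form reduction plus iterative \texttt{DecDenu} application you spell out. The inductive invariant you flag as the main obstacle (that each diagonal factor $1-\lambda_i^{h_i}y_i$, being free of the previously eliminated $\lambda_j$'s, survives intact so that each step remains of the form \eqref{equ-single-DecDenu}) is indeed the content that the paper leaves implicit in its reference to the algorithm.
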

\begin{proof}
  Note that $K^v$ is isomorphic to the shifted cone $K^{\mydot}(M_r\langle J\rangle)$ through an unimodular transformation and
 $\sigma_{K^{v}}(\y)= W \circ  \mathcal{Z}_\y [M_r\langle J\rangle]$. By Step 2 of Algorithm \ref{alg_simpconeANDdecdenu},
 $\mathcal{Z}_\y [M_r\langle J\rangle]$ can be computed.
\end{proof}

\subsection{On general full-dimensional simplicial cone}
Any simplicial cone $K$ can be transformed into a full-dimensional simplicial cone so that Barvinok's algorithm applies to
give the desired decomposition.

Assume $K$ is full-dimensional. We present an alternative approach in our framework.
We construct a homogeneous linear Diophantine system $A\alpha=\0$ and decompose $\sigma_K(\y)$ using Theorem \ref{theo-Denum}.

More precisely, we use the encoding $K^*=K(B)$, where $B\in\Z^{d\times d}$.
Then $K=K((B^{-1})^{t})$. We construct $A=(A_1,A_2)=(B^t,-\textit{id}_{d})$. By \eqref{equ-Mr}, we have
$$M_d\langle [d]\rangle=\begin{pmatrix}
(B^{-1})^{t}\\
\textit{id}_{d}
\end{pmatrix}.$$
There exists an isomorphism from the cone for $(B^{-1})^{t}$ to the cone for $M_d\langle [d]\rangle$
through a unimodular transformation as follows.

\begin{equation}\label{equ-K-K*}
\begin{pmatrix}
(B^{-1})^{t}\\
\0
  \end{pmatrix}=\begin{pmatrix}
          \textit{id}_{d} & \0 \\
         -B^t & \textit{id}_{d}
       \end{pmatrix} M_d\langle [d]\rangle.
\end{equation}

Then Theorem \ref{theo-Denum} can be applied to express $\sigma_K(\y)$ in the form given by Equation \eqref{equ-unimodular}.
Moreover, we can make further observations here.

Let the Smith normal form of the $B^{t}$ be given by $H = UB^{t}V$.  By Equations \eqref{equ-K-K*} and \eqref{e-M-M^}, we have
\begin{equation*}
\begin{pmatrix}
(B^{-1})^{t}\\
\0
  \end{pmatrix}=W\cdot \hat{M}_d\langle [d]\rangle=\begin{pmatrix}
          \textit{id}_{d} & \0 \\
         -B^t & \textit{id}_{d}
       \end{pmatrix} \begin{pmatrix}
          V & \0 \\
         \0 & \textit{id}_{d}
       \end{pmatrix}\hat{M}_d\langle [d]\rangle.
\end{equation*}

Applying Equation \eqref{e-success-CT} yields
\begin{equation*}
\sigma_K(\y)=W\circ\mathcal{Z}_{\y}[\hat{M}_d\langle [d]\rangle]=W\circ\CT\limits_{\lambda_{d}}\frac{1}{\underline{1-\lambda_d^{h_{d}} y_d}}\cdots\CT\limits_{\lambda_{1}} \frac{1}{\underline{1-\lambda_1^{h_{1}}y_1}}\cdot\frac{1}{\prod_{i=1}^{d} (1- \Lambda^{\alpha_{d+i}}y_{d+i})}.
\end{equation*}
If $h_i=1$ for all $i<d$, then the above constant term simplifies to a constant term as in \eqref{equ-single-DecDenu}.
Thus, Remark \ref{rem-probability} says that a general full-dimensional cone $K$, under a unimodular transformation, is very likely to be a denumerant cone.

For general $h_i$, repeated application of Equation \eqref{equ-addifomula-A} gives
\begin{equation}\label{equ-generating-K-unit}
\sigma_K(\y)=W\circ\left(\frac{1}{\det(H)}\sum_{j_1=1}^{h_1}\cdots \sum_{j_d=1}^{h_d}\frac{1}{\prod_{i=1}^{d}(1-y_{d+i}\hat{\y}^{-H^{-1}\cdot \alpha_{d+i}}(\zeta_{1,j_1},\dots,\zeta_{d,j_d})^{\alpha_{d+i}})}\right),
\end{equation}
where $\hat{\y}=(y_1,y_2,\dots,y_d)$, and $\zeta_{t,j_t}$ ranges over all $h_t$th roots of unity.
Thus, we have written $\sigma_K(\y)$ as a sum of $\det(B)= h_1h_2\cdots h_d$ terms using roots of unity.

Equation \eqref{equ-generating-K-unit} is superior to the following result obtained in \cite{diaz}, as can be easily seen from concrete examples.
\begin{theo}[Diaz-Robins]\label{t-Diza-Robins}
Suppose the cone $ K=K(C) \subset \R^{d} $, where $C=(c_{ij})=(C_{1} , \dots , C_{d})\in\Z^{d\times d}$ is lower-triangular. Let
$ q_{k} := c_{11} \cdots c_{kk} \ ( k = 1, \dots , d ) $ and
$\mathcal{ G }:= \left( \Z / q_{1} \Z \right) \times \dots \times \left( \Z / q_{d} \Z \right) $. Then
  \[ \sum_{ \alpha \in K \cap \Z^{d} } e^{ - 2 \pi  \alpha^t \cdot \cs  }  = \frac{ 1 }{ 2^{d} |\mathcal{G}| } \sum_{ \mathbf{r} \in \mathcal{G} } \prod_{ k=1 }^{ d } \left( 1 + \coth \frac{ \pi }{ q_{k} } \left( (\cs + i \mathbf{r})^t\cdot C_{k} \right) \right) \ . \]
\end{theo}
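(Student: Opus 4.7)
The plan is to derive the Diaz--Robins formula as an instance of our framework: we express $\sigma_K(\y)$ for $K=K(C)$ as the $\mathcal{Z}_{\y}$-image of a fractional generating function built from the leading principal invariants $q_k$, and then unwind $\mathcal{Z}_{\y}$ through the root-of-unity realisation implicit in Lemma~\ref{lem-unit-E}.

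First I would establish the fractional identity
\[
\sigma_K(\y)\;=\;\mathcal{Z}_{\y}\prod_{j=1}^{d}\frac{1}{1-\y^{\,C_j/q_j}}.
\]
The central observation is that any $t\in\R_{\geq 0}^{d}$ with $Ct\in\Z^{d}$ automatically satisfies $t_j\in(1/q_j)\Z$. This is shown inductively: from $c_{jj}t_j=(Ct)_j-\sum_{i<j}c_{ji}t_i$ and the hypothesis $t_i\in(1/q_i)\Z\subseteq(1/q_{j-1})\Z$ for $i<j$, we conclude $c_{jj}t_j\in(1/q_{j-1})\Z$, hence $t_j\in(1/q_j)\Z$. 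Re-parametrising $t_j=u_j/q_j$ with $u_j\in\N$ converts $\sum_t\y^{Ct}$ into $\prod_j\sum_{u_j\geq 0}(\y^{C_j/q_j})^{u_j}$, while the residual constraint $Ct\in\Z^{d}$ is enforced exactly by $\mathcal{Z}_{\y}$. All factors $\y^{C_j/q_j}$ are small in the field $G$ because $C$ is lower-triangular with positive diagonal, so the geometric expansion is legitimate.

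Next I would evaluate $\mathcal{Z}_{\y}$ through the character-sum representation: since every fractional exponent of $y_k$ in the product has denominator dividing $q_k$, setting $\xi_k=e^{2\pi i/q_k}$ yields
\[
\mathcal{Z}_{y_k}F(\y)\;=\;\frac{1}{q_k}\sum_{j_k=0}^{q_k-1}F(\y)\Big|_{y_k^{1/q_k}\mapsto\xi_k^{j_k}y_k^{1/q_k}}.
\]
Applying this for all $k=1,\dots,d$, the effect on the $l$-th denominator factor is computed via $(\xi_k^{j_k}y_k^{1/q_k})^{c_{kl}q_k/q_l}=e^{2\pi ij_kc_{kl}/q_l}y_k^{c_{kl}/q_l}$, which is well defined because $c_{kl}q_k/q_l=c_{kl}\cdot c_{l+1,l+1}\cdots c_{kk}\in\Z$ when $k\geq l$ and $c_{kl}=0$ when $k<l$. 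Multiplying across $k$ and recognising $\sum_{k}j_kc_{kl}=(\mathbf{j}^{t}C)_l$, one arrives at
\[
\sigma_K(\y)\;=\;\frac{1}{|\mathcal{G}|}\sum_{\mathbf{j}\in\mathcal{G}}\prod_{l=1}^{d}\frac{1}{1-e^{2\pi i(\mathbf{j}^{t}C)_l/q_l}\,\y^{\,C_l/q_l}}.
\]

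Finally, specialising $\y=e^{-2\pi\cs}$ merges the two exponentials into $1-e^{-2\pi(\cs-i\mathbf{j})^{t}C_l/q_l}$; the substitution $\mathbf{r}=-\mathbf{j}$ (a bijection of $\mathcal{G}$) matches the sign convention of the theorem, and the identity $(1-e^{-2x})^{-1}=\tfrac12(1+\coth x)$ produces the claimed $\frac{1}{2^{d}|\mathcal{G}|}\sum_{\mathbf{r}}\prod_{l}(1+\coth(\pi(\cs+i\mathbf{r})^{t}C_l/q_l))$. The main obstacle will be the careful bookkeeping with fractional powers and roots of unity: one needs to verify inside the field $G$ of Section~\ref{sec:pre} that $\mathcal{Z}_{\y}$ genuinely commutes with the infinite geometric expansion (an instance of Proposition~\ref{prop-Z_y}), that the exponents $c_{kl}q_k/q_l$ stay integral throughout so every root-of-unity factor is unambiguous, and that convergence at $\y=e^{-2\pi\cs}$ is guaranteed by choosing $\cs$ in the interior of the dual cone; after these checks the argument reduces to a tidy application of Lemma~\ref{lem-unit-E} and the character-orthogonality form of $\mathcal{Z}_{\y}$.
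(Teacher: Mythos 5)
The paper does not prove Theorem~\ref{t-Diza-Robins}; it merely cites \cite{diaz} and then compares the formula, unfavourably, with its own Equation~\eqref{equ-generating-K-unit}. So there is no ``paper's proof'' to compare against, and your proposal amounts to an independent derivation of Diaz--Robins inside the paper's $\mathcal{Z}_{\y}$ framework. It checks out. The central reduction $\sigma_K(\y)=\mathcal{Z}_{\y}\prod_{j=1}^{d}(1-\y^{C_j/q_j})^{-1}$ is correct: the induction showing that $Ct\in\Z^{d}$ forces $t_j\in(1/q_j)\Z$ rests exactly on the divisibility chain $q_1\mid q_2\mid\cdots\mid q_d$ that lower triangularity supplies; the re-parametrisation $t_j=u_j/q_j$ is bijective because $C$ is invertible; and the exponents $c_{kl}q_k/q_l$ are integers, so the substitution $y_k^{1/q_k}\mapsto\xi_k^{j_k}y_k^{1/q_k}$ is unambiguous in $G$. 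Unwinding $\mathcal{Z}_{y_k}$ as a $q_k$-term average is precisely the character-orthogonality mechanism behind Proposition~\ref{prop-singleCT} and Equation~\eqref{equ-addifomula-A}, and the closing identity $(1-e^{-2x})^{-1}=\tfrac{1}{2}(1+\coth x)$ is elementary. The bookkeeping you flag at the end does go through; the one point I would state explicitly is that the specialisation $\y\mapsto e^{-2\pi\cs}$ is legitimate only for $\cs$ in the interior of the dual cone, which is a standing hypothesis in \cite{diaz}.

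It is worth contrasting your derivation with the paper's own nearby formula~\eqref{equ-generating-K-unit}. There the cone is encoded through its dual $K^*=K(B)$, the matrix $B^t$ is put in Smith normal form $H=UB^tV$, and one obtains a sum of $\det(H)=\lvert\det B\rvert$ terms indexed by tuples of $h_t$-th roots of unity. You sidestep both the dual cone and Smith normal form by exploiting the primal triangular structure directly: the $q_k$ are the leading principal minors of $C$, and $\mathcal{G}$ is read off from them. The price of this directness is precisely what the paper has in mind when calling its formula ``superior'': $\lvert\mathcal{G}\rvert=q_1\cdots q_d$ is in general much larger than $q_d=\det C$, so Diaz--Robins produces many more terms, whereas the Smith-normal-form route isolates the elementary divisors and typically makes most of them equal to $1$ (cf.\ Remark~\ref{rem-probability}). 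Your argument thus gives an internal, $\mathcal{Z}_{\y}$-calculus explanation for why the two formulas differ in size, and it would make a nice companion to Section~\ref{sec:unimodular} if written out in full.
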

%
\subsection{Examples}
The effectiveness of the \texttt{SimpCone} algorithm has been demonstrated in \cite[Section 7]{2023Algebraic}, where the algorithm was employed for volume computation.
In particular, it resolved the open problem for the volume of order $7$ magic square polytope. Here,
we present two examples of cone decompositions to illustrate the effectiveness of our approach to unimodular cone decompositions.

\noindent \emph{Magic Square Cone of Order $n$}, denoted \( MSC_n \), is characterized by the following system of linear constraints:
\begin{align*}
a_{i,1} + a_{i,2} + \cdots + a_{i,n}-m &=0, & \text{for } 1 \leq i \leq n, \\
a_{1,j} + a_{2,j} + \cdots + a_{n,j}-m &=0, & \text{for } 1 \leq j \leq n, \\
a_{1,1} + a_{2,2} + \cdots + a_{n,n}-m &=0, & a_{n,1} + a_{n-1,2} + \cdots + a_{1,n}-m &=0.
\end{align*}
This cone is closely related to the well-known challenges in counting magic squares.
For the case of \( n = 5 \), the decomposition of \( MSC_5 \) was first addressed in \cite{J2004Short}, and the algorithm was implemented within the \texttt{LattE} package. Initially, the triangulation of \( MSC_5 \) yields \( 6705 \) simplicial cones. Subsequently, the application of the Barvinok algorithm results in \( 24916 \) unimodular cones.

Utilizing Algorithm \ref{alg_simpconeANDdecdenu} does not yield a shorter sum, but not far away.
Indeed, the \texttt{SimpCone} method in Step 1 produces \( 8869 \) simplicial cones. This leads to a final sum of \( 61967 \) terms.
However, if we use Equation \eqref{equ-generating-K-unit}, then we obtain a sum of 15033 terms,  with each simplicial cone corresponding to $\leq 12$ terms.

\medskip
\noindent \emph{Random examples.}
We generate random matrices $A = (a_{i,j})_{4\times n}$ of rank $4$ with $|a_{i,j}| \leq 100$ and compute the generating functions of the corresponding cone $K=\{\alpha\in\R_{\geq 0}^n: \ A\alpha=\0\}$. For each cone $K$, we compare the performances using Algorithm \ref{alg_simpconeANDdecdenu} and \texttt{LattE} package. The triangulation in \texttt{LattE} is performed on $K^*$ in the dual space and then dualized back. This is because such a cone $K$ has far fewer facets (which are known to correspond to extreme rays of $K^*$) than extreme rays.

In Table \ref{table_randcase}, we present the data obtained from the two methods. Specifically, we record the count of simplicial cones generated by the \texttt{SimpCone} method, as well as the aggregate number of unimodular cones. Analogously, for the \texttt{LattE} package, we document the simplicial cones derived from triangulation, along with the corresponding count of unimodular cones.
\begin{table}[htbp]
\begin{tabular}{c|c|cc|cc|}
\hline
& & \multicolumn{2}{c|}{Alg. \ref{alg_simpconeANDdecdenu}} & \multicolumn{2}{c|}{LattE package}     \\ \hline
&size of A     & \multicolumn{1}{l|}{Simplicial cone}        & unimodular            & \multicolumn{1}{l|}{Simplicial cone} & unimodular   \\ \hline
Ex.1& 4 $\times$ 13 & \multicolumn{1}{l|}{3}   & 10317491  & \multicolumn{1}{l|}{27}  & $>$400 million($>$15 h) \\ \hline
Ex.2 & 4 $\times$ 13 & \multicolumn{1}{l|}{9}   & 170696890 & \multicolumn{1}{l|}{21}  & $>$400 million($>$15h) \\ \hline
Ex.3& 4 $\times$ 10 & \multicolumn{1}{l|}{22}  & 4655003   & \multicolumn{1}{l|}{19}  & 4261409      \\ \hline
Ex.4&4 $\times$ 10 & \multicolumn{1}{l|}{8}   & 981565    & \multicolumn{1}{l|}{18}  & 3471124      \\ \hline
Ex.5&4 $\times$ 8  &  \multicolumn{1}{l|}3    &4812       &\multicolumn{1}{l|} 4     & 7434         \\ \hline
Ex.6&4 $\times$ 8  &  \multicolumn{1}{l|}8    &11296      &\multicolumn{1}{l|} 6     & 10313        \\ \hline
\end{tabular} \caption{Data for  Alg. \ref{alg_simpconeANDdecdenu} and \texttt{LattE}. The ``$>$" symbol means to interrupt the calculation after 15 hours.
  \label{table_randcase}}
\end{table}

From the data on Examples $1$ and $2$, it is natural to ask: Does
the \texttt{SimpCone} algorithm produces fewer simplicial cones than triangulation methods, at least for a certain class of cones?
We cannot answer this question because there are too many ways to triangulate a cone in geometry. We conclude that \texttt{LattE}'s triangulation
the method can be optimized.

Consider Example $1$, where
$$A=\left(\begin{array}{ccccccccccccc}
11 & -26 & -36 & -3 & 61 & 28 & -89 & -7 & -60 & 4 & -94 & 19 & -35
\\
 10 & 1 & -27 & -32 & -60 & -35 & -23 & -63 & 77 & 10 & 43 & -12 & 88
\\
 60 & -16 & -17 & 86 & -24 & 51 & 21 & 37 & -65 & -54 & -60 & -68 & -72
\\
 79 & -33 & 9 & 17 & 37 & 51 & -9 & 80 & -84 & 52 & 24 & -13 & -7
\end{array}\right).$$
The corresponding cone $K$ has dimension $9$ and 25 extreme rays. The \texttt{SimpCone} algorithm outputs $3$ simplicial cones
with the help of two additional fake extreme rays. This is the best possible decomposition as each simplicial cone has $9$ rays
and all $25$ extreme rays must be included. Indeed, \texttt{LattE} produces  27 simplicial cones.

We have used Maple to re-implement the triangulation method from \cite{verdoolaege2005computation}, a method adopted in
 \texttt{LattE}. We find that, in the sense of isomorphism, certain ``random height" choices lead to the output of the \texttt{SimpCone} algorithm.
We have difficulty understanding why the \texttt{LattE} package consistently gives the same output in our repeated computations.

\section{Future Research Projects}\label{sec:project}
We have developed the \(\texttt{SimpCone[S]}\) algorithm for simplicial cone decomposition within the framework of algebraic combinatorics. Our decomposition differs from the triangulation method in geometry, as discussed in Section 2.1.

The \(\texttt{SimpCone[S]}\) algorithm includes the \(\texttt{Polyhedral Omega}\) algorithm as a special case. Upon examining the details of the \(\texttt{Polyhedral Omega}\) algorithm, one can observe that each elimination of \(\lambda_i\) can be explained by Brion's theorem. One might ask whether the \(\texttt{SimpCone}\) algorithm (in the homogeneous case) can be interpreted geometrically or obtained through some form of triangulation. The answer is likely negative: For a given cone \( K \), a simplicial cone decomposition of the form \(\sigma_K(\y) = \sum \pm \sigma_{K_i}(\y)\) does not necessarily imply that \(\sigma_K^v(\y) = \sum \pm \sigma_{K_i^v}(\y)\), which is a property that holds for triangulations.

The \(\texttt{SimpCone[S]}\) algorithm has given rise to the following four research projects.

\medskip Project 1 involves enhancing the algorithm by leveraging its flexibility. In addition to varying the choice of \(\texttt{ S} \), we can also change the working field \( G \) by permuting the order of the \( y \)-variables, which does not affect the series expansion in \eqref{equ-inhomo}. Furthermore, we can utilize the fact that \( A\alpha = \mathbf{0} \Leftrightarrow W A\alpha = \mathbf{0} \) for any nonsingular matrix \( W \). Therefore, we can select a matrix \( W \) such that the first row of \( WA \) has a minimum number of positive entries, which is our \( c_1 \). Consequently, the elimination of \( \lambda_1 \) will produce \( c_1 \) terms. This approach is applicable to each \( O_{i,j}\langle I_{i,j};J_{i,j}\rangle \). This leads to the following problem:
\begin{prob}
Given a matrix \( A \), find a row vector \( w \) such that \( wA \) has the minimum number of positive entries among all choices of nonzero \( w \).
\end{prob}

The problem is trivial when \( r = 1 \), since \( w \) can be either \( 1 \) or \( -1 \). If we have a fast method for solving the above problem, then we can employ a locally optimal strategy to minimize the number of outputs, at least when \( r \) is small and \( n \) is large.

\medskip Project 2 focuses on the computation of \(\sigma_K(\y)\) for a full-dimensional simplicial cone \(K\). A known fact is that the numerator of \(\sigma_K(\y)\) is a sum of \(\ind(K)\) monomials, and hence can be enumerated when \(K\) has a small index. The work in \cite{K2007A} introduces new algorithms based on irrational signed decomposition in the primal space and the known fact. However, it has been reported that \(\ind(K)\) might be large even if \(\ind(K^*)\) is small. Consequently, if we work in the dual space, \(K^*\) must be decomposed into unimodular cones to dual back.

Nevertheless, Equation \eqref{equ-generating-K-unit} provides an alternative solution when \(\ind(K^*)\) is small. Indeed, a fast algorithm was presented in \cite{2023An} for the case where \(K\) is a denumerant cone. Our second project aims to extend these ideas for general \(K\).

\medskip Project 3 involves extending the analysis to parametric polyhedra \( P = P(A; \mathbf{b}) \). The case where \(\mathbf{b}\) is parametric has been considered in \cite{2024Counting} . The classical example is the Ehrhart series. For the case where \( A \) contains parameters, we have provided a simple illustration in Example \ref{exam-a}.

\medskip Project 4 aims to explore the application of Equation \eqref{equ-strategies}. Suppose we have obtained a simplicial cone decomposition as in \eqref{equ-dec-K}, but \(\ind(K_1)\) and \(\ind(K_1^*)\) are extraordinarily large. Then we can use Equation \eqref{equ-strategies} to replace \(K_1\) with other cones that might coincide with existing cones.


\end{document}